\definecolor{colorcita}{RGB}{21,86,130}
\definecolor{colorref}{RGB}{5,10,177}
\definecolor{colorweb}{RGB}{177,6,38}
\newcommand{\N}{\mathbb{N}}
\newcommand{\Z}{\mathbb{Z}}
\newcommand{\C}{\mathbb{C}}
\newcommand{\R}{\mathbb{R}}
\newcommand{\K}{\mathbb{K}}
\newcommand{\D}{\mathbb{D}}
\newtheorem{theo}{Theorem}[section]
\newtheorem{coro}[theo]{Corollary}
\newtheorem{prop}[theo]{Proposition}
\theoremstyle{definition}
\newtheorem{remark}[theo]{Remark}
\title{Mean ergodic weighted shifts on Köthe echelon spaces}
\author[T.\ Kalmes]{Thomas Kalmes}
\address{Thomas Kalmes, Chemnitz University of Technology, Faculty of Mathematics, 09107 Chemnitz, Germany}
\email{thomas.kalmes@math.tu-chemnitz.de}
\author[D.\ Santacreu]{Daniel Santacreu}
\address{Daniel Santacreu, Instituto Universitario Matem\'atica Pura y Aplicada IUMPA, Universitat Polit\`ecnica de Val\`encia, Camino de Vera, s/n, 46701 Valencia, Spain}
\email{dasanfe5@posgrado.upv.es}
\date{\today}
\begin{document}

\begin{abstract}
	Necessary and sufficient conditions are given for mean ergodicity, power boundedness, and topologizability for weighted backward shift and weighted forward shift operators, respectively, on K\"othe echelon spaces in terms of the weight sequence and the K\"othe matrix. These conditions are evaluated for the special case of power series spaces which allow for a characterization of said properties in many cases. In order to demonstrate the applicability of our conditions, we study the above properties for several classical operators on certain function spaces.
	
	\mbox{}\\
	
	\noindent Keywords: Mean ergodic operator; Weighted shift operator; Power bounded operator; K\"othe echelon space; Power series space
	\\ 
	
	\noindent MSC 2020: 47A35, 47B37
	
\end{abstract}

\maketitle

\section{Introduction}

The aim of this note is to investigate mean ergodicity and related properties of weighted shift operators on K\"othe echelon spaces. Recall that a continuous linear operator $T$ on a locally convex Hausdorff space (briefly, lcHs) $E$ is said to me \textit{mean ergodic} if the limits
$$\lim_{n\rightarrow\infty}\frac{1}{n}\sum_{m=1}^n T^m x, x\in E,$$
exist in $E$. Since the seminal result of von Neumann (1931) who proved that unitary operators on a Hilbert space are mean ergodic, numerous contributions have been made to the topic of mean ergodicity and its applications. For the special case when $E$ is a Banach space, the theory is particularly well-developed and rich, see e.g.\ \cite[Chapter VIII]{DuSc1958}, \cite{EiFaHaNa15}, and \cite{Krengel}, and references therein.

In recent years, several authors studied mean ergodicity and related properties of continuous linear operators on lcHs which are not Banach spaces (but mainly Fr\'echet spaces) both from an abstract point of view (see e.g.\ \cite{ABR2009,ABR2010, AlBoRi13, AlBoRi14}, \cite{BoPaRi11}, \cite{ Piszczek10b, Piszczek10}) as well as for concrete types of continuous linear operators (see e.g.\ \cite{AlJoMe22}, \cite{Be14}, \cite{BeBoFe13}, \cite{Beltran20},  \cite{BeGCJoJo16, BGJJ2016mw}, \cite{BeJo21}, \cite{BoDo2011A, BoDo11B},  \cite{BoJoRo18}, \cite{BoRi09}, \cite{FeGaJo18}, \cite{GCJoJo16}, \cite{HaShZh19}, \cite{JoSaSP20, JoSaSP21}, \cite{K2019p, K2020}, \cite{KaSa22}, \cite{Rodriguez19},  \cite{SeMeBo20}).

In the present article we study mean ergodicity, power boundedness and topologizability for weighted shift operators on K\"othe echelon spaces. While weighted shift operators on sequence spaces are a natural testing field to study operator theoretic properties, several operators on Fr\'echet spaces which occur naturally in analysis are conjugate to a weighted shift operator on a suitable K\"othe echelon space. Since the aforementioned properties are stable under conjugacy, our results will be used to investigate these properties for the Volterra operator and the differentiation operator on spaces of holomorphic functions as well as for the annihiliation and creation operator on the space of rapidly decreasing smooth functions.

The paper is organized as follows. In section \ref{sec: notation etc} we recall some notation and notions and we provide some basic results which will be used throughout. In section \ref{sec: Koethe spaces}, we give necessary and sufficient conditions for topologizability, power boundedness, and mean ergodicity of weighted shift operators on K\"othe echelon spaces $\lambda_p(A)$ in terms of the weight sequence and the K\"othe matrix $A$. In section \ref{sec: power series spaces} we evaluate our results for the special case of power series spaces. In this special situation of K\"othe echelon spaces, the special structure of the K\"othe matrix allows for particular simple characterizations of said properties in many cases. Concrete examples are discussed in order to demonstrate the applicability of our results.

\section{Notation and preliminary results}\label{sec: notation etc}
Throughout this article we use standard notation from functional analysis; we refer to \cite{Jarchow1981}, \cite{MeVo1997}. As usual, we denote by $\omega=\K^{\N_0}$ the vector space of all $\K$-valued sequences (where as usual $\K\in\{\R,\C\}$) equipped with the Fr\'echet space topology of coordinatewise convergence. For a fixed sequence $w=(w_n)_{n\in\N_0}\in\omega$ we define the corresponding weighted backward shift and weighted forward shift, respectively, as
\[B_w:\omega\rightarrow\omega, (x_n)_{n\in\N_0}\mapsto \left(w_n x_{n+1}\right)_{n\in\N_0}\]
and
\[F_w:\omega\rightarrow\omega, (x_n)_{n\in\N_0}\mapsto \left(w_n x_{n-1}\right)_{n\in\N_0},\]
where we use the notational convention $x_{-n}:=0$ for all $n\in\N$ which will be employed throughout this paper. Additionally, throughout, we use the convention $\frac{0}{0}:=0$, $\frac{\alpha}{0}:=\infty$ for $\alpha>0$ as well as the notation $e_r=\left(\delta_{m,r}\right)_{m\in\N_0}$ (Kronecker's $\delta$) for $r\in\N_0$. For the special case of $w_n=1$, $n\in\N_0$, we simply write $B$ and $F$ instead of $B_w$ and $F_w$, respectively.

Recall that a matrix of non-negative real numbers $A=(a_{n,k})_{n,k\in\N_0}$ is a \emph{Köthe matrix} if $0\leq a_{n,k}\leq a_{n,k+1}$ for every $n,k\in\N_0$, and if for each $n\in\N_0$ there is $k\in\N_0$ such that $a_{n,k}>0$. For $1\leq p<\infty$ we define as usual the \emph{Köthe echelon space} (\emph{of order} $p$) by
\[ \lambda_p(A):=\left\{ x\in \omega: \Vert x\Vert_{k,p}:=\Vert x\Vert_k:=\left( \sum_{n=0}^\infty |x_{n}a_{n,k}|^p\right)^{1/p}<\infty, \text{ for each } k\in\N_0 \right\}. \]
Analogously, for $p=\infty$, we have
\[ \lambda_\infty(A):=\left\{ x\in \omega: \Vert x\Vert_{k,\infty}:=\Vert x\Vert_k:=\sup_{n\in\N_0} |x_{n}a_{n,k}|<\infty, \text{ for each } k\in\N_0 \right\} \]
and
\[ \lambda_0(A):=\left\{ x\in \lambda_\infty(A): \lim_{n\rightarrow\infty}x_n a_{n,k}=0 \text{ for each } k\in\N_0 \right\}.\]
Then, for $1\leq p\leq \infty$,  $\lambda_p(A)$ is a Fr\'echet space with fundamental sequence of seminorms $(\Vert\cdot\Vert_{k,p})_{k\in\N_0}$, $\lambda_0(A)$ is a closed subspace of $\lambda_\infty(A)$, and $\lambda_p(A)$ is separable for $p\in[1,\infty)\cup\{0\}$.

Given an \emph{exponent sequence}, i.e.\ a monotonically increasing sequence $\alpha=(\alpha_n)_{n\in\N_0}$ in $[0,\infty)$ with $\lim_{n\rightarrow\infty}\alpha_n=\infty$. For arbitrary strictly increasing sequences $(s_k)_{k\in\N_0}$ with $\lim_{k\rightarrow\infty}s_k=0$ and $(t_k)_{k\in\N_0}$ with $\lim_{k\rightarrow\infty}t_k=\infty$ we define the K\"othe matrices $A_0(\alpha):=\left(\exp(s_k\alpha_n)\right)_{k,n\in\N_0}$ and $A_\infty(\alpha):=\left(\exp(t_k\alpha_n)\right)_{k,n\in\N_0}$ as well as $\Lambda_0(\alpha):=\lambda_1(A_0(\alpha))$ and $\Lambda_\infty(\alpha):=\lambda_1(A_\infty(\alpha))$. It is not hard to see that the definition of $\Lambda_0(\alpha)$ and $\Lambda_\infty(\alpha)$ does not depend on the particular choice of the sequences $(s_k)_{k\in\N_0}$ and $(t_k)_{k\in\N_0}$, respectively. $\Lambda_0(\alpha)$ is called \emph{power series space of finite type} and $\Lambda_\infty(\alpha)$ \emph{power series space of infinite type} (associated to $\alpha$).

As we are interested in ergodicity and related properties of the weighted backward and forward shift operators on $\lambda_p(A)$, we first have to characterize when $B_w$ and $F_w$ operate on these spaces.

\begin{prop}\label{prop: continuity}
	For a K\"othe matrix $A$, $p\in [1,\infty]\cup\{0\}$ and $w=(w_n)_{n\in\N_0}\in\omega$ the following results hold.
	\begin{itemize}
		\item[(a)] For $B_w$ the following are equivalent.
		\begin{itemize}
			\item[(i)] $B_w:\lambda_p(A)\rightarrow\lambda_p(A)$ is correctly defined.
			\item[(ii)] $B_w:\lambda_p(A)\rightarrow\lambda_p(A)$ is continuous.
			\item[(iii)] For every $k\in\N_0$ there are $l\in\N_0$ and $C>0$ such that
			$$\forall\,n\in\N_0:\,|w_n|a_{n,k}\leq C a_{n+1,l}.$$
		\end{itemize}
		\item[(b)] For $F_w$ the following are equivalent.
		\begin{itemize}
			\item[(i)] $F_w:\lambda_p(A)\rightarrow\lambda_p(A)$ is correctly defined.
			\item[(ii)] $F_w:\lambda_p(A)\rightarrow\lambda_p(A)$ is continuous.
			\item[(iii)] For every $k\in\N_0$ there are $l\in\N_0$ and $C>0$ such that
			$$\forall\,n\in\N:\,|w_n|a_{n,k}\leq C a_{n-1,l}.$$
		\end{itemize}
	\end{itemize}
\end{prop}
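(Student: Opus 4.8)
The plan is to prove the chain of implications (iii) $\Rightarrow$ (ii) $\Rightarrow$ (i) $\Rightarrow$ (iii) in each of (a) and (b), since (ii) $\Rightarrow$ (i) is trivial (a continuous operator is in particular well-defined). I will spell out the argument for (a); the case of (b) is entirely analogous after shifting the index by one and using the convention $x_{-1}=0$, which makes the forward shift kill the contribution of the "missing" coordinate.

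For (iii) $\Rightarrow$ (ii): fix $k\in\N_0$ and choose $l\in\N_0$ and $C>0$ as in (iii). For $x\in\lambda_p(A)$ and $1\le p<\infty$ we estimate
\[
\|B_w x\|_{k,p}^p=\sum_{n=0}^\infty |w_n x_{n+1}|^p a_{n,k}^p
=\sum_{n=0}^\infty |x_{n+1}|^p\,(|w_n|a_{n,k})^p
\le C^p\sum_{n=0}^\infty |x_{n+1}|^p a_{n+1,l}^p
\le C^p\|x\|_{l,p}^p,
\]
so $B_w x\in\lambda_p(A)$ and $\|B_w x\|_{k,p}\le C\|x\|_{l,p}$; for $p\in\{\infty,0\}$ the same computation with the supremum in place of the sum gives $\|B_w x\|_{k,\infty}\le C\|x\|_{l,\infty}$, and the estimate $|w_n x_{n+1}|a_{n,k}\le C|x_{n+1}|a_{n+1,l}\to 0$ shows $B_w$ maps $\lambda_0(A)$ into itself. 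Since the seminorm estimate holds for every $k$, $B_w$ is continuous on $\lambda_p(A)$, and in particular well-defined, giving (ii) (hence also (i)).

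For (i) $\Rightarrow$ (iii): suppose (iii) fails, i.e.\ there is $k\in\N_0$ such that for every $l\in\N_0$ and every $C>0$ there is $n$ with $|w_n|a_{n,k}>C\,a_{n+1,l}$. I would like to build a single $x\in\lambda_p(A)$ with $B_w x\notin\lambda_p(A)$, which contradicts (i). The natural strategy is to exploit that failure of (iii) at level $k$ persists for all larger $l$ (since the seminorms increase), pick inductively a strictly increasing sequence of indices $n_j$ together with levels so that $|w_{n_j}|a_{n_j,k}>2^j\,a_{n_j+1,j}$ for each $j$, and set $x=\sum_j c_j e_{n_j+1}$ with scalars $c_j>0$ chosen small enough that $\sum_j c_j^p a_{n_j+1,l}^p<\infty$ for every $l$ — this is possible by a diagonal choice because for fixed $l$ only finitely many terms have $a_{n_j+1,j}$ with $j<l$ "uncontrolled", and for $j\ge l$ one has $a_{n_j+1,l}\le a_{n_j+1,j}$. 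Then $x\in\lambda_p(A)$, but $\|B_w x\|_{k,p}^p\ge\sum_j c_j^p(|w_{n_j}|a_{n_j,k})^p>\sum_j c_j^p 2^{jp} a_{n_j+1,j}^p$, and by choosing $c_j$ so that $c_j^p a_{n_j+1,j}^p \ge 2^{-j}$ while still keeping summability against every fixed level $l$ (again possible since $a_{n_j+1,j}\ge a_{n_j+1,l}$ for $j\ge l$, so one is really only constraining finitely many early terms), the last sum diverges. Hence $B_w x\notin\lambda_p(A)$, the desired contradiction.

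The main obstacle is the bookkeeping in this last step: one must choose the coefficients $c_j$ so as to satisfy two competing requirements simultaneously — summability of $\sum_j c_j^p a_{n_j+1,l}^p$ for every $l$ (to ensure $x\in\lambda_p(A)$) and divergence of $\sum_j c_j^p (|w_{n_j}|a_{n_j,k})^p$ (to ensure $B_w x\notin\lambda_p(A)$). The key observation that resolves the tension is that the level $l$ appearing in the failure of (iii) may be taken arbitrarily large (monotonicity of $(a_{n,l})_l$), so one can afford $a_{n_j+1,j}$ to grow, and then a choice like $c_j := 1/(j\,a_{n_j+1,j}^{\,? })$ — more precisely $c_j$ with $c_j^p a_{n_j+1,j}^p = j^{-2}$, say — makes $\sum_j c_j^p a_{n_j+1,l}^p=\sum_{j<l}c_j^p a_{n_j+1,l}^p+\sum_{j\ge l}j^{-2}<\infty$ for each $l$, while $\sum_j c_j^p 2^{jp}j^{-2}=\infty$. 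For $p\in\{\infty,0\}$ the analogous construction uses that a single $e_{n+1}$ with large enough $|w_n|a_{n,k}/a_{n+1,l}$ already violates membership, so one does not even need a series — failure of (iii) directly produces, for each fixed $l$, an element on which the $k$-th seminorm of $B_w x$ exceeds any prescribed multiple of a fixed seminorm, and a standard closed-graph / uniform-boundedness argument (or the explicit series construction above, truncated) finishes it. This completes the plan for (a), and (b) follows by the same argument with the roles of $n$ and $n-1$ interchanged.
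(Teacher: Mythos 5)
Your overall route is sound but genuinely different from the paper's on the key implication. The paper proves (i)$\Rightarrow$(ii) abstractly: $\lambda_p(A)$ is Fr\'echet and $B_w$ is continuous on $\omega$, so the graph is closed and the Closed Graph Theorem applies; then (ii)$\Rightarrow$(iii) follows by evaluating the continuity estimate at $x=e_n$, and (iii)$\Rightarrow$(i) is trivial. You instead prove (iii)$\Rightarrow$(ii) by the direct seminorm estimate (which is fine, and also yields the paper's (iii)$\Rightarrow$(i)) and replace the Closed Graph step by an explicit construction: if (iii) fails, build $x=\sum_j c_j e_{n_j+1}\in\lambda_p(A)$ with $\Vert B_wx\Vert_k=\infty$. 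This buys a self-contained, more elementary argument at the cost of bookkeeping; the paper's CGT shortcut is much shorter.

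There are, however, concrete soft spots in your (i)$\Rightarrow$(iii) construction as written. First, the selection of strictly increasing $n_j$ is asserted but not justified: failure of (iii) only provides, for each level and each constant, \emph{some} witness $n$, and for a fixed level the unboundedness could a priori sit at finitely many $n$ (namely where $a_{n+1,l}=0<|w_n|a_{n,k}$). To get $n_j>n_{j-1}$ you must also raise the level: choose $l_j\geq j$ so large that $a_{n+1,l_j}>0$ for all $n\leq n_{j-1}$ (possible by the K\"othe matrix axioms); then $\sup_{n\leq n_{j-1}}|w_n|a_{n,k}/a_{n+1,l_j}<\infty$ while the full supremum is infinite, so a witness $n_j>n_{j-1}$ exists with $|w_{n_j}|a_{n_j,k}>2^ja_{n_j+1,l_j}$, and monotonicity in $l$ still gives summability of $\sum_j c_j^pa_{n_j+1,l}^p$ for every fixed $l$. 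Second, your normalization $c_j^pa_{n_j+1,l_j}^p=j^{-2}$ is impossible when $a_{n_j+1,l_j}=0$; this case is harmless (then $a_{n_j+1,l}=0$ for all $l\leq l_j$, so the coordinate costs nothing in $\Vert x\Vert_l$ for those $l$ and $c_j$ may be taken as large as needed to make $c_j|w_{n_j}|a_{n_j,k}\geq1$), but it must be said. Third, and this is the one outright incorrect claim: for $p\in\{0,\infty\}$ a single $e_{n+1}$ with large $|w_n|a_{n,k}/a_{n+1,l}$ does \emph{not} ``violate membership''---every finitely supported sequence lies in $\lambda_p(A)$ and so does its image, since each seminorm of $B_we_{n+1}=w_ne_n$ equals $|w_n|a_{n,k}<\infty$; such a vector only defeats a fixed continuity estimate, not well-definedness. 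For $p\in\{0,\infty\}$ you need the same infinite series (with suprema in place of sums), or else you must fall back on the closed-graph/equicontinuity argument you mention parenthetically, which is exactly the paper's proof. With these repairs your alternative argument goes through.
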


\begin{proof}
	We only prove part (a), the proof of part (b) is, mutatis mutandis, the same. Since $\lambda_p(A)$ is a Fr\'echet space and $B_w:\omega\rightarrow\omega$ is obviously continuous, a standard application of the Closed Graph Theorem shows that (i) implies (ii).
	
	Next, if (ii) holds, we conclude
	$$\forall\,k\in\N_0\,\exists\,l\in\N_0, C>0\,\forall\,x\in\lambda_p(A):\,\Vert B_w x\Vert_k\leq C\Vert x\Vert_l.$$
	Evaluating this inequality for $x=e_n$ shows that (iii) is true.
	
	Finally, (iii) trivially implies (i).
\end{proof}

The previous proposition yields the following result. Throughout the article we use the convention $\ln(0)=-\infty$.

\begin{coro}\label{cor: continuity on power series spaces}
	Let $\alpha=\left(\alpha_n\right)_{n\in\N_0}$ be an exponent sequence such that $\limsup_{n\rightarrow\infty}\alpha_{n+1}\alpha_n^{-1}<\infty$. Then, for $w\in\omega$ the following results hold.
	\begin{itemize}
		\item[(a)] For the weighted shifts $B_w$ and $F_w$ the following are equivalent.
		\begin{itemize}
			\item[(i)] $B_w$ is correctly defined on $\Lambda_\infty(\alpha)$.
			\item[(ii)] $B_w$ is continuous on $\Lambda_\infty(\alpha)$.
			\item[(iii)] $F_w$ is correctly defined on $\Lambda_\infty(\alpha)$.
			\item[(iv)] $F_w$ is continuous on $\Lambda_\infty(\alpha)$.
			\item[(v)]
			$\limsup\limits_{n\rightarrow\infty}\frac{\ln|w_n|}{\alpha_n}<\infty.$
		\end{itemize}
		\item[(b)] For the weighted shifts $B_w$ and $F_w$ the following are equivalent.
		\begin{itemize}
			\item[(i)] $B_w$ is correctly defined on $\Lambda_0(\alpha)$.
			\item[(ii)] $B_w$ is continuous on $\Lambda_0(\alpha)$.
			\item[(iii)] $F_w$ is correctly defined on $\Lambda_0(\alpha)$.
			\item[(iv)] $F_w$ is continuous on $\Lambda_0(\alpha)$.
			\item[(v)]
			$\limsup\limits_{n\rightarrow\infty}\frac{\ln|w_n|}{\alpha_n}\leq 0.$
		\end{itemize}
	\end{itemize}
\end{coro}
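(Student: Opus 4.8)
The plan is to read off everything from Proposition~\ref{prop: continuity} applied to the K\"othe matrices $A_\infty(\alpha)=\bigl(\exp(t_k\alpha_n)\bigr)_{k,n\in\N_0}$ and $A_0(\alpha)=\bigl(\exp(s_k\alpha_n)\bigr)_{k,n\in\N_0}$ — we fix once and for all admissible sequences $(t_k)$ with $t_k\to\infty$ and $(s_k)$ strictly increasing to $0$, so that $s_k<0$ for all $k$ — by passing to logarithms and using that $\alpha$ is, up to a multiplicative constant, invariant under an index shift. Indeed $\alpha_{n-1}\le\alpha_n\le\alpha_{n+1}$ holds trivially, while the hypothesis $\limsup_n\alpha_{n+1}\alpha_n^{-1}<\infty$ provides $M\ge1$ and $N_0\in\N_0$ with $\alpha_{n+1}\le M\alpha_n$, equivalently $\alpha_{n-1}\ge M^{-1}\alpha_n$, for all $n\ge N_0$ (and $\alpha_n>0$ for large $n$). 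In both parts the equivalences (i)$\Leftrightarrow$(ii) and (iii)$\Leftrightarrow$(iv) are exactly Proposition~\ref{prop: continuity}(a) and~(b), so it only remains to show that each of (ii) and (iv) is equivalent to~(v).

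For part~(a), condition~(iii) of Proposition~\ref{prop: continuity}(a) for $A_\infty(\alpha)$ reads ``$\forall k\,\exists l,C\,\forall n:\ \ln|w_n|\le\ln C+t_l\alpha_{n+1}-t_k\alpha_n$''. Taking $k=0$, dividing by $\alpha_n$ for large $n$ and invoking $\alpha_{n+1}\le M\alpha_n$ forces $\limsup_n\ln|w_n|\alpha_n^{-1}<\infty$, i.e.\ (ii)$\Rightarrow$(v); the analogous computation from Proposition~\ref{prop: continuity}(b)(iii) uses $\alpha_{n-1}\le\alpha_n$ in place of $\alpha_{n+1}$ and therefore needs no growth hypothesis, giving (iv)$\Rightarrow$(v). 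Conversely, assuming (v) one fixes a real $\gamma$ (without loss of generality $\gamma>0$) and $N\ge N_0$ with $\ln|w_n|\le\gamma\alpha_n$ for all $n\ge N$. Given $k$, choosing $l$ large enough that $t_l\ge\gamma+t_k$ one gets, for $n\ge N$, $|w_n|\exp(t_k\alpha_n)\le\exp\bigl((\gamma+t_k)\alpha_n\bigr)\le\exp(t_l\alpha_n)\le\exp(t_l\alpha_{n+1})$ by monotonicity of $\alpha$, and absorbing the finitely many indices $n<N$ into a constant $C$ yields Proposition~\ref{prop: continuity}(a)(iii), i.e.\ (v)$\Rightarrow$(ii) — again no growth hypothesis is needed. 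For (v)$\Rightarrow$(iv) one argues the same way with $\alpha_{n-1}$ instead of $\alpha_{n+1}$, now choosing $l$ with $t_l\ge M(\gamma+t_k)$ and using $\alpha_{n-1}\ge M^{-1}\alpha_n$; this is the point where the growth hypothesis enters.

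For part~(b) the matrix is $A_0(\alpha)=\bigl(\exp(s_k\alpha_n)\bigr)$ with $s_k<0$ and $s_k\to0$. In the logarithmic form of Proposition~\ref{prop: continuity}(a)(iii)/(b)(iii) the summand $s_l\alpha_{n\pm1}$ is $\le0$, so one immediately gets $\ln|w_n|\alpha_n^{-1}\le(\ln C)\alpha_n^{-1}-s_k$ for large $n$, hence $\limsup_n\ln|w_n|\alpha_n^{-1}\le-s_k$ for every $k$; letting $k\to\infty$ gives $\limsup_n\ln|w_n|\alpha_n^{-1}\le0$, so (ii)$\Rightarrow$(v) and (iv)$\Rightarrow$(v), with no growth hypothesis. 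Conversely, assume (v). For (v)$\Rightarrow$(iv): given $k$, pick $l>k$, so $s_l-s_k>0$, and $N$ with $\ln|w_n|\le(s_l-s_k)\alpha_n$ for $n\ge N$; then $|w_n|\exp(s_k\alpha_n)\le\exp(s_l\alpha_n)\le\exp(s_l\alpha_{n-1})$ for $n\ge N$ (using $s_l<0$ and $\alpha_{n-1}\le\alpha_n$), and no growth hypothesis is needed. For (v)$\Rightarrow$(ii): given $k$, choose $l$ so large that $s_l>M^{-1}s_k$, so $\varepsilon:=Ms_l-s_k>0$, and $N\ge N_0$ with $\ln|w_n|\le\varepsilon\alpha_n$ for $n\ge N$; then $|w_n|\exp(s_k\alpha_n)\le\exp(Ms_l\alpha_n)\le\exp(s_l\alpha_{n+1})$ for $n\ge N$, using $\alpha_{n+1}\le M\alpha_n$ and $s_l<0$ — this is the only place in part~(b) where the growth hypothesis enters.

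Everything else is bookkeeping: logarithms, the monotonicity $\alpha_{n-1}\le\alpha_n\le\alpha_{n+1}$, and absorbing finitely many indices into constants. The one genuinely essential use of $\limsup_n\alpha_{n+1}\alpha_n^{-1}<\infty$ — and the step to watch — is the passage between the $\pm1$ index shift of the operator and a bounded rescaling of the K\"othe-matrix parameter $t_l$ (resp.~$s_l$): without it, continuity of $B_w$ on $\Lambda_\infty(\alpha)$ no longer forces $\limsup_n\ln|w_n|\alpha_n^{-1}<\infty$ (the term $t_l\alpha_{n+1}\alpha_n^{-1}$ need not stay bounded), and on $\Lambda_0(\alpha)$ condition~(v) no longer forces continuity of $B_w$ (the weight $\exp(s_l\alpha_{n+1})$ may decay too fast), so the stated equivalences genuinely rely on the hypothesis.
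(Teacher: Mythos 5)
Your proposal is correct and follows essentially the same route as the paper: reduce everything to Proposition \ref{prop: continuity}, pass to logarithms, and use the monotonicity of $\alpha$ together with the bound $\alpha_{n+1}\leq M\alpha_n$ exactly in the implications (a)(ii)$\Rightarrow$(v), (a)(v)$\Rightarrow$(iv) and (b)(v)$\Rightarrow$(ii); working with general admissible sequences $(t_k)$, $(s_k)$ instead of the concrete choices $k$ and $-1/(k+1)$, and dropping the negative term $s_l\alpha_{n\pm1}$ outright in part (b), are only cosmetic simplifications. Your bookkeeping of which implications survive without the growth hypothesis is consistent with (and slightly sharpens) the paper's remark.
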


\begin{proof}
	By Proposition \ref{prop: continuity}, (i) and (ii) are equivalent, as are (iii) and (iv), in (a) and (b). We will show that (a) (v) is equivalent to condition (a) (iii) from Proposition \ref{prop: continuity} for the particular case of $\Lambda_\infty(\alpha)$ as well as to condition (b) (iii) from Proposition \ref{prop: continuity} which will finish the proof of part (a). Analogously, (b) will be proved once we have shown that (b) (v) is equivalent to condition (a) (iii) from Proposition \ref{prop: continuity} for the particular case of $\Lambda_0(\alpha)$ as well as to condition (b) (iii) from Proposition \ref{prop: continuity}. We set $M:=\limsup_{n\rightarrow\infty}\alpha_{n+1}\alpha_n^{-1}.$
	
	Let us show that (a) (ii) implies (a) (v). It follows from Proposition \ref{prop: continuity} that for each $k\in\N_0$ there are $l\in\N_0$ and $C>0$ with
	$$\forall\,n\in\N_0:\,|w_n|\leq C\exp\left(l\alpha_{n+1}-k\alpha_n\right).$$
	Let $M'>M$ be arbitrary and $N_0\in\N$ such that $\alpha_{n+1}\alpha_n^{-1}\leq M'$ for $n\geq N_0$. We conclude for $n\geq N_0$
	$$|w_n|\leq C\exp\left(\alpha_n(l M'-k)\right)$$
	which implies that $\limsup_{n\rightarrow\infty}\ln|w_n|(\alpha_n)^{-1}<\infty$, i.e.\ (a) (v) holds. 
	
	On the other hand, if (a) (v) holds, for fixed $k\in\N$ let $N_0\in\N$ and $c>0$ be such that 
	$$\forall\,n\geq N_0: \ln|w_n|\leq c k\alpha_n\mbox{ and } \alpha_n>0.$$
	Then, for $n\geq N_0$ we conclude
	$$|w_n|\exp(k\alpha_n)\leq \exp(k(c+1)\alpha_n)\leq\exp(k(c+1)\alpha_{n+1})$$
	so that condition (a) (iii) from Proposition \ref{prop: continuity} holds true for every $l\in\N$ with $l\geq k(c+1)$ which implies (a) (ii). 
	
	Next we assume that (a) (iv) holds. From Proposition \ref{prop: continuity} (b) we deduce for $k\in\N_0$ for suitable $l\in\N_0, C>0$
	$$\forall\,n\in\N:\,|w_n|\leq C\exp\left(l\alpha_{n-1}-k\alpha_n\right)\leq C\exp\left((l-k)\alpha_n\right).$$
	Thus, (a) (v) follows. On the other hand, if (a) (v) holds, for arbitrary $k\in\N$ there are $c>0$ and $N_0\in\N$ such that for every $n\geq N_0$ there hold $\alpha_n\leq M'\alpha_{n-1}$ as well as $\ln|w_n|\leq c k\alpha_n$. Hence, for $n\geq N_0$ we deduce $|w_n|\exp\left(k\alpha_n\right)\leq \exp\left((c+M')k\alpha_{n-1}\right)$ so that the condition (b) (iii) from Proposition \ref{prop: continuity} follows and $F_w$ is continuous on $\Lambda_\infty(\alpha)$. The proof of (a) is complete.
	
	In order to complete the proof of part (b), we first assume that $B_w$ is continuous on $\Lambda_0(\alpha)$. Thus, by Propositions \ref{prop: continuity} (a), for every $k\in\N$ there are $l\in\N$ and $C>0$ such that 
	$$\forall\,n\in\N_0:\,|w_n|\exp\left(-\frac{1}{k}\alpha_n\right)\leq C\exp\left(-\frac{1}{l}\alpha_{n+1}\right),$$
	so that
	$$\forall\,n\in\N_0: |w_n|\leq C\exp\left(\frac{\alpha_n}{k}-\frac{\alpha_{n+1}}{l}\right)\leq C\exp\left(\alpha_n\left(\frac{1}{k}-\frac{1}{l}\right)\right).$$
	Since $\alpha$ tends to infinity, there is $N_k$ such that $\ln(C)/\alpha_n\leq 1/k$ whenever $n\geq N_k$. Thus, by the above inequality we deduce $\sup_{n\geq N_k}\frac{\ln|w_n|}{\alpha_n}\leq \frac{2}{k}-\frac{1}{l}<\frac{2}{k}$. Since $k\in\N$ was chosen arbitrarily (b) (v) follows.
	
	On the other hand, if (b) (v) holds, we fix $k\in\N_0$ as well as $M'>M$ and we choose $l\in\N$ with $l/M'>(k+1)$. Let $N_0\in\N$ be such that  $\frac{1}{k+1}-\frac{M'}{l}>\frac{\ln|w_n|}{\alpha_n}$ as well as $\alpha_{n+1}\alpha^{-1}_n\leq M'$ for each $n\geq N_0$. For $n\geq N_0$ we conclude
	$$|w_n|\exp\left(-\frac{1}{k+1}\alpha_n\right)\leq\exp\left(-\frac{M'}{l}\alpha_n\right)\leq \exp\left(-\frac{1}{l}\alpha_{n+1}\right).$$
	Thus, conditions (a) (iii) from Proposition \ref{prop: continuity} holds which implies the continuity of $B_w$ on $\Lambda_0(\alpha)$.
	
	Finally, to finish the proof of part (b), we assume that $F_w$ is continuous on $\Lambda_0(\alpha)$. Thus, by Proposition \ref{prop: continuity} (b), we deduce for $k\in\N$ the existence of $C>0$ and $l\in\N$ such that
	$$\forall\,n\geq N_k:\,|w_n|\leq C\exp\left(\frac{1}{k}\alpha_n-\frac{1}{l}\alpha_{n-1}\right)\leq C\exp\left(\alpha_n\left(\frac{1}{k}-\frac{k}{M'l}\right)\right),$$
	where $N_k$ is such that $\alpha_n\leq M'\alpha_{n-1}$ as well as $c/\alpha_n\leq 1/k$ for every $n\geq N_k$. From the previous inequality we derive $\sup_{n\geq N_k}\ln|w_n|/\alpha_n\leq 2/k$. Because $k\in\N$ was chosen arbitrarily, the validity of (b) (v) follows. On the other hand, if (b) (v) is valid, we fix $k\in\N$. Moreover, let $N_k\in\N$ be such that $|w_n|\leq\exp\left(\alpha_n (2k)^{-1}\right)$ for each $n\geq N_k$. For every $n\geq N_k$
	$$|w_n|\exp\left(-\frac{1}{k}\alpha_n\right)\leq\exp\left(-\frac{1}{2k}\alpha_n\right).$$
	Hence, we deduce that condition (b) (iii) from Proposition \ref{prop: continuity} holds so that $F_w$ is continuous on $\Lambda_0(\alpha)$.
\end{proof}

\begin{remark}
	It should be noted that without the hypothesis $\limsup_{n\rightarrow\infty}\alpha_{n+1}\alpha_n^{-1}<\infty$ in Corollary \ref{cor: continuity on power series spaces}, condition (a)(v) implies (a)(ii) and follows from (a)(iv) while condition (b)(v) implies (b)(iv) and is implied by (b)(ii). 
\end{remark}

In the rest of this section we recall some notions and abstract results in order to motivate our considerations in the following section. Let $E$ be a locally convex Hausdorff space (briefly, lcHs) and $T\in \mathcal{L}(E)$, where as usual we denote by $\mathcal{L}(E)$ the space of continuous linear operators on $E$. $T$ is said to be {\it topologizable} if for every continuous seminorm $p$ on $E$ there is a continuous seminorm $q$ on $E$ such that for every $m\in\N$ there is $\gamma_m>0$ with
\[p\left(T^mx\right)\leq \gamma_m q(x) \text{ for all } x\in E.\]
For the special case that in the above inequality one can take $\gamma_m=1$ for all $m\in\N$ we say that $T$ is {\it power bounded}. In this case the family $\left\{T^m:\,m\in\N\right\}$ is an equicontinuous subset of $\mathcal{L}(E)$. Moreover, $T$ is {\it Cesàro bounded} if the family $\left\{T^{[n]}:\,n\in\N\right\}$ is an equicontinuous subset of $\mathcal{L}(E)$, where $T^{[n]}$ denotes the $n$-th Cesàro mean given by
\begin{equation*}
\frac{1}{n}\sum_{m=1}^{n} T^m.
\end{equation*}
An operator $T\in\mathcal{L}(E)$ is called {\it mean ergodic} if there is $P\in\mathcal{L}(E)$ such that $\lim_{n\to\infty}T^{[n]}x=Px$ for each $x\in E$. In case that the convergence is uniform on bounded subsets of $E$ then $T$ is called {\it uniformly mean ergodic}.

Let $F$ be a lcHs. An operator $S\in \mathscr{L}(F)$ is called {\it conjugate} to the operator $T\in\mathscr{L}(E)$ if there is a bijective, continous linear operator $\Phi:E\to F$ with continuous inverse such that $\Phi\circ T=S\circ \Phi$. It is not hard to see that all of the above properties of $T$ are stable under conjugacy, i.e.\ $S$ has any of the above properties if (and only if) $T$ does.

Clearly, every power bounded operator $T$ is Cesàro bounded and $(\frac{1}{n}T^n x)_{n\in\N}$ converges to $0$ for each $x\in E$. Moreover, on a barrelled space $E$ an operator $T$ is mean ergodic if and only if $(T^{[n]}x)_{n\in\N}$ converges for every $x\in E$. Additionally, on a barrelled space $E$, mean ergodic operators $T$ are Cesàro bounded and due to $\frac{1}{n}T^nx=T^{[n]}x-\frac{n-1}{n}T^{[n-1]}x$, the sequences $\left(\frac{1}{n}T^nx\right)_{n\in\N}, x\in E$, converge to zero. Conversely, as shown in \cite[Corollary 2.5]{ABR2009} (see also \cite[Theorem 2.3]{KaSa22}) the following useful result holds. Recall that a reflexive lcHs are always barrelled. 

\begin{theo}\label{theo: mean ergodic}
Let $E$ be a reflexive lcHs and $T\in \mathcal{L}(E)$. Then $T$ is mean ergodic if and only if $T$ is Cesàro bounded and $\lim_{n\to\infty} \frac{1}{n}T^n x=0$ for every $x\in E$.
\end{theo}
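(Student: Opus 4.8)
The plan is to treat the two implications separately. The forward implication is essentially free from the remarks preceding the statement: a reflexive lcHs is barrelled, so if $T^{[n]}x$ converges for every $x\in E$, then by the Banach--Steinhaus theorem the family $\{T^{[n]}:n\in\N\}$ is equicontinuous, i.e.\ $T$ is Cesàro bounded; and writing $\frac{1}{n}T^{n}x=T^{[n]}x-\frac{n-1}{n}T^{[n-1]}x$ and letting $n\to\infty$, with both Cesàro means tending to the same limit $Px$, yields $\frac{1}{n}T^{n}x\to 0$.

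For the converse, assume $T$ is Cesàro bounded --- equivalently $\{T^{[n]}:n\in\N\}$ is equicontinuous --- and $\frac{1}{n}T^{n}x\to 0$ for every $x$. I would build a mean ergodic decomposition in three steps. \emph{Step 1.} For $z=y-Ty\in(I-T)E$ the sum $T^{[n]}z$ telescopes to $\frac{1}{n}Ty-T\!\left(\frac{1}{n}T^{n}y\right)$, which tends to $0$ by continuity of $T$ and the hypothesis; a routine $\varepsilon$-argument using the equicontinuity of $\{T^{[n]}:n\in\N\}$ then upgrades this to $T^{[n]}x\to0$ for all $x\in\overline{(I-T)E}$, while trivially $T^{[n]}x=x$ for $x\in\ker(I-T)$; in particular $\ker(I-T)\cap\overline{(I-T)E}=\{0\}$. \emph{Step 2.} Fix $x\in E$. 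By equicontinuity the Cesàro orbit $\{T^{[n]}x:n\in\N\}$ is bounded, hence relatively $\sigma(E,E')$-compact since $E$ is reflexive; let $Px$ be a weak cluster point, so $T^{[n_\alpha]}x\to Px$ weakly along a subnet. Since $(T-I)T^{[n]}x=\frac{1}{n}(T^{n+1}x-Tx)\to0$ and $T-I$ is weakly continuous, $(T-I)Px=0$; and since $T^{[n]}x-x\in(I-T)E$ for every $n$ (because $(T^{m}-I)x=-(I-T)(I+T+\cdots+T^{m-1})x$), the element $Px-x$ lies in the $\sigma(E,E')$-closure of the convex set $(I-T)E$, which equals $\overline{(I-T)E}$ by Mazur's theorem. \emph{Step 3.} Thus $x=Px+(x-Px)$ with $Px\in\ker(I-T)$ and $x-Px\in\overline{(I-T)E}$, so $E=\ker(I-T)\oplus\overline{(I-T)E}$; by Step 1 this $Px$ is uniquely determined (hence independent of the chosen subnet) and $T^{[n]}x\to Px$ for every $x$. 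Finally $P$ is linear and, being the pointwise limit of an equicontinuous family, continuous, so $T$ is mean ergodic.

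I expect the crux to be Step 2: producing the candidate limit $Px$ as a weak cluster point of the bounded Cesàro orbit --- the one place where reflexivity is genuinely used --- and then correctly identifying it, which forces one to pass between the original and the weak topology via the weak continuity of $T-I$ and the coincidence of weak and original closures for the convex set $(I-T)E$. Steps 1 and 3 are routine equicontinuity bookkeeping. Alternatively, the statement may simply be quoted from \cite[Corollary 2.5]{ABR2009} or \cite[Theorem 2.3]{KaSa22}.
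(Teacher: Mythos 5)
Your argument is correct, but it is worth pointing out that the paper itself gives no proof of this statement: it is quoted from \cite[Corollary 2.5]{ABR2009} (see also \cite[Theorem 2.3]{KaSa22}), exactly as you anticipate in your closing remark. What you have written is essentially a self-contained reconstruction of that cited result via the classical Yosida-type mean ergodic decomposition: the forward implication is the Banach--Steinhaus/barrelledness bookkeeping the paper already sketches in the surrounding text, and the converse is the splitting $E=\ker(I-T)\oplus\overline{(I-T)E}$, where reflexivity enters precisely as you say --- semi-reflexivity makes the bounded Cesàro orbit relatively $\sigma(E,E')$-compact, producing the candidate limit $Px$, while barrelledness (automatic for reflexive spaces) gives the equicontinuity used in Steps 1 and 3; the identification of $Px$ via weak continuity of $T-I$ and the coincidence of weak and original closures of the convex set $(I-T)E$ is sound. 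The citation keeps the paper short and defers to the general theory of Albanese--Bonet--Ricker; your version buys a self-contained proof at the cost of about a page, and all the individual steps (telescoping on $(I-T)E$, the $\varepsilon$-upgrade to the closure, Mazur, uniqueness of the decomposition) check out.
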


\begin{theo}\label{theo:ergodicity and transposed}
	{\rm (see \cite[Theorem 2.5]{KaSa22}).} Let $E$ be a Montel space and let $T\in \mathcal{L}(E)$.
	\begin{enumerate}
		\item[(a)] $T$ is mean ergodic if and only if $T$ is uniformly mean ergodic.
		\item[(b)] The following are equivalent.
		\begin{enumerate}
			\item[(i)] $T$ is Ces\`aro bounded and $\lim_{n\rightarrow\infty}\frac{T^n}{n}=0$, pointwise in $E$.
			\item[(ii)] $T$ is mean ergodic on $E$.
			\item[(iii)] $T$ is uniformly mean ergodic on $E$.
			\item[(iv)] $T^t$ is mean ergodic on $(E',\beta(E',E))$.
			\item[(v)] $T^t$ is uniformly mean ergodic on $(E',\beta(E',E))$.
			\item[(vi)] $T^t$ is Ces\`aro bounded on $(E',\beta(E',E))$ and $\lim_{n\rightarrow\infty}\frac{(T^{t})^n}{n}=0$, pointwise in $(E',\beta(E',E))$.
		\end{enumerate}
		Here, as usual, $\beta(E',E)$ denotes the strong dual topology on $E'$.
	\end{enumerate}
\end{theo}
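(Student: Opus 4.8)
The plan is to reduce everything to Theorem~\ref{theo: mean ergodic} together with a single duality observation. Write $E'_\beta:=(E',\beta(E',E))$ and recall three standard facts about a Montel space $E$: it is reflexive (hence barrelled); its strong dual $E'_\beta$ is again Montel (hence reflexive and barrelled); and reflexivity identifies $(E'_\beta)'_\beta$ with $E$ and $(T^t)^t$ with $T$ under the canonical map. Granting these, the statement splits into three blocks.

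\emph{Part (a), and the ``same-side'' equivalences in (b).} I would first observe that if $T$ is mean ergodic, then $(T^{[n]}x)_n$ converges for every $x\in E$, so $\{T^{[n]}:n\in\N\}$ is pointwise bounded and, $E$ being barrelled, equicontinuous. Since on a Montel space the bounded sets are exactly the relatively compact ones, an equicontinuous sequence of operators that converges pointwise converges uniformly on bounded sets (a routine $\varepsilon/3$-type argument with a finite net, using that the pointwise limit lies again in the equicontinuous family); hence $T$ is uniformly mean ergodic, and since the converse is trivial, (a) holds. The same two inputs then give, in (b), the mutual equivalence of (i), (ii), (iii): indeed (i)$\Leftrightarrow$(ii) is Theorem~\ref{theo: mean ergodic} because $E$ is reflexive, and (ii)$\Leftrightarrow$(iii) is part (a). Applying Theorem~\ref{theo: mean ergodic} and part (a) instead to $T^t\in\mathcal{L}(E'_\beta)$ — legitimate precisely because $E'_\beta$ is Montel, hence reflexive — yields the mutual equivalence of (iv), (v), (vi).

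\emph{The duality bridge (ii)$\Leftrightarrow$(iv).} Here it suffices to prove (ii)$\Rightarrow$(iv), the reverse implication following by applying this very implication to $T^t$ on the Montel space $E'_\beta$ and invoking $(T^t)^t=T$ and $(E'_\beta)'_\beta=E$. So I would assume $T$ mean ergodic, with $T^{[n]}\to P$; by part (a) this convergence is uniform on bounded subsets of $E$, i.e.\ $\sup_{x\in B}p(T^{[n]}x-Px)\to 0$ for every bounded $B\subseteq E$ and every continuous seminorm $p$ on $E$. Fixing $\varphi\in E'$ and choosing a continuous seminorm $p$ with $|\varphi(y)|\le p(y)$ for all $y\in E$, and using $(T^{[n]})^t=(T^t)^{[n]}$, one gets for every bounded $B\subseteq E$ that $\sup_{x\in B}|\langle(T^t)^{[n]}\varphi-P^t\varphi,x\rangle|=\sup_{x\in B}|\varphi(T^{[n]}x-Px)|\le\sup_{x\in B}p(T^{[n]}x-Px)\to 0$, which is exactly the assertion that $(T^t)^{[n]}\varphi\to P^t\varphi$ in $E'_\beta$. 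Thus $T^t$ is mean ergodic on $E'_\beta$, i.e.\ (iv) holds. Combining the three blocks closes the cycle (i)$\Leftrightarrow$(ii)$\Leftrightarrow$(iii)$\Leftrightarrow$(iv)$\Leftrightarrow$(v)$\Leftrightarrow$(vi).

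\emph{Main obstacle.} The subtlety is not in any single estimate but in making sure the ``dualize and re-apply'' step is genuinely licit: one must invoke that the strong dual of a Montel space is Montel, so that Theorem~\ref{theo: mean ergodic} and part (a) apply verbatim to $T^t$ on $E'_\beta$, and that reflexivity lets one identify $(T^t)^t$ with $T$ — without both, the two blocks of equivalences cannot be joined into a single cycle. A convenient by-product of routing the argument through the mean-ergodicity nodes (ii) and (iv), rather than through (i) and (vi), is that one never has to transfer the condition $\tfrac1n T^n\to 0$ across the duality by hand; it is absorbed into Theorem~\ref{theo: mean ergodic}.
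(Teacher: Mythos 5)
This statement is quoted from \cite[Theorem 2.5]{KaSa22} and the paper gives no proof of it, so there is nothing internal to compare against; judged on its own, your argument is correct and is essentially the standard route one would expect (and that the cited reference follows in spirit). Your part (a) is the classical Banach--Steinhaus plus ``equicontinuous and pointwise convergent implies uniformly convergent on precompact sets'' argument, using that bounded sets in a Montel space are relatively compact; the blocks (i)$\Leftrightarrow$(ii)$\Leftrightarrow$(iii) and (iv)$\Leftrightarrow$(v)$\Leftrightarrow$(vi) then come from Theorem~\ref{theo: mean ergodic} (reflexivity) and part (a), applied on $E$ and on $E'_\beta$ respectively, the latter being legitimate because the strong dual of a Montel space is Montel and $T^t\in\mathcal{L}(E'_\beta)$; and your bridge (ii)$\Rightarrow$(iv) via uniform convergence of $T^{[n]}$ on bounded sets, $(T^{[n]})^t=(T^t)^{[n]}$ and $P^t\in\mathcal{L}(E'_\beta)$, with the converse obtained by reapplying the implication on $E'_\beta$ and using reflexivity to identify $(T^t)^t$ with $T$ and $(E'_\beta)'_\beta$ with $E$, is sound (the limit operator in the definition of mean ergodicity is automatically continuous by barrelledness, which you use implicitly and correctly). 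A minor virtue worth keeping explicit, as you note, is that routing through nodes (ii) and (iv) lets Theorem~\ref{theo: mean ergodic} absorb the conditions $\frac{1}{n}T^n\to 0$ and $\frac{1}{n}(T^t)^n\to 0$, so nothing has to be transported across the duality by hand.
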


In particular, power bounded operators on Montel spaces are uniformly mean ergodic (see \cite[p. 917]{BoPaRi11}). In addition, as for a mean ergodic operator $T$ on a lcHs $E$, for each $x\in E$ the sequence $\left(\frac{1}{n}T^nx\right)_{n\in\N}$ is in particular bounded, we see that under the additional hypothesis that $E$ is barrelled the family of operators $\left\{\frac{1}{n}T^n;\,n\in\N\right\}$ is equicontinuous. It follows that mean ergodic operators on barrelled spaces are topologizable.

\section{Ergodicity and related properties for weighted shifts}\label{sec: Koethe spaces}

We begin by characterizing when weighted backward shifts and weighted forward shifts are topologizable or power bounded on $\lambda_p(A)$. Setting $B_w^0=I$ and $F_w^0=I$, observe that for $m\in\N_0$ the $m$-th iterate of $B_w$ applied to $x=\left(x_n\right)_{n\in\N_0}$ yields
\[ B_w^mx=\left(x_{n+m}\prod_{j=0}^{m-1}w_{n+j}\right)_{n\in\N_0}=\left(x_{0+m}\prod_{j=0}^{m-1}w_{0+j} ,x_{1+m}\prod_{j=0}^{m-1}w_{1+j},x_{2+m}\prod_{j=0}^{m-1}w_{2+j},\dots\right)\]
while the $m$-th iterate of $F_w$ is given by
\[F_w^mx=\left(x_{n-m}\prod_{j=0}^{m-1}w_{n-j}\right)_{n\in\N_0}=\left(\underbrace{0,\ldots,0}_{m\textrm{-times}},x_0\prod_{j=1}^{m}w_{0+j},x_1\prod_{j=1}^{m}w_{1+j},x_2\prod_{j=1}^{m}w_{2+j},\ldots\right).\]
Thus, in case of $p\in[1,\infty)$, for the $k$-th seminorm we have 
\begin{equation}\label{seminorm Bm, p finite}
 \Vert B^m x\Vert_{k,p}^p= \sum_{n=0}^\infty \left|x_{n+m}\left(\prod_{j=0}^{m-1}w_{n+j}\right)a_{n,k}\right|^p=\sum_{n=m}^\infty \left|x_{n}\left(\prod_{j=1}^{m}w_{n-j}\right)a_{n-m,k}\right|^p
\end{equation}
and
\begin{equation}\label{seminorm Fm, p finite}
	\Vert F^m x\Vert_{k,p}^p= \sum_{n=0}^\infty \left|x_{n-m}\left(\prod_{j=0}^{m-1}w_{n-j}\right)a_{n,k}\right|^p=\sum_{n=0}^\infty \left|x_{n}\left(\prod_{j=1}^{m}w_{n+j}\right)a_{n+m,k}\right|^p
\end{equation}
while for $p\in\{0,\infty\}$ it follows
\begin{equation}\label{seminorm Bm, p infty}
	\Vert B^m x\Vert_{k,\infty}=\sup_{n\in\N_0}\left|x_{n+m}\left(\prod_{j=0}^{m-1}w_{n+j}\right)a_{n,k}\right|=\sup_{n\geq m}\left|x_n\left(\prod_{j=1}^mw_{n-j}\right)a_{n-m,k}\right|
\end{equation}
and
\begin{equation}\label{seminorm Fm, p infty}
	\Vert F^m x\Vert_{k,\infty}=\sup_{n\in\N_0}\left|x_{n-m}\left(\prod_{j=0}^{m-1}w_{n-j}\right)a_{n,k}\right|=\sup_{n\in\N_0}\left|x_n\left(\prod_{j=1}^mw_{n+j}\right)a_{n+m,k}\right|.
\end{equation}

\begin{prop}\label{prop: topologizable}
	For a K\"othe matrix $A$, $p\in [1,\infty]\cup\{0\}$, and $w\in\omega$ the following hold.
	\begin{itemize}
		\item[(a)] If $B_w$ is continuous on $\lambda_p(A)$ the following are equivalent.
		\begin{itemize}
			\item[(i)] $B_w$ is topologizable on $\lambda_p(A)$.
			\item[(ii)] For every $k\in\N_0$ there is $l\in \N_0$ such that for each $m\in\N_0$
			\begin{equation}\label{eq: characterization topologizability B}
				\sup_{n\in\N_0}\frac{\left|\prod_{j=0}^{m-1} w_{n+j}\right|a_{n,k}}{a_{n+m,l}}<\infty.
			\end{equation}
		\end{itemize} 
		\item[(b)] If $F_w$ is continuous on $\lambda_p(A)$ the following are equivalent.
		\begin{itemize}
			\item[(i)] $F_w$ is topologizable on $\lambda_p(A)$.
			\item[(ii)] For every $k\in\N_0$ there is $l\in \N_0$ such that for each $m\in\N_0$
			\begin{equation}\label{eq: characterization topologizability F}
				\sup_{n\in\N_0}\frac{\left|\prod_{j=1}^{m} w_{n+j}\right|a_{n+m,k}}{a_{n,l}}<\infty.
			\end{equation}
		\end{itemize} 
	\end{itemize}
\end{prop}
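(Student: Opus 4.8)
The plan is to prove part (a) only, since part (b) follows by the same reasoning applied to the forward shift formulas \eqref{seminorm Fm, p finite} and \eqref{seminorm Fm, p infty}. Recall that topologizability of $B_w$ means: for every $k$ there is $l$ such that for every $m$ there is $\gamma_m>0$ with $\|B_w^m x\|_k\le\gamma_m\|x\|_l$ for all $x\in\lambda_p(A)$. So the task is to show that the existence of such constants $\gamma_m$ is equivalent to the finiteness of the suprema in \eqref{eq: characterization topologizability B}.

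First I would treat the direction (i)$\Rightarrow$(ii). Assuming topologizability, fix $k$, take the corresponding $l$, and for each $m$ evaluate the estimate $\|B_w^m x\|_k\le\gamma_m\|x\|_l$ at the unit vectors $x=e_{n+m}$, $n\in\N_0$. Using the explicit form of $B_w^m e_{n+m}$ read off from the displayed iterate formula, $B_w^m e_{n+m}=\bigl(\prod_{j=0}^{m-1}w_{n+j}\bigr)e_n$, one gets $\bigl|\prod_{j=0}^{m-1}w_{n+j}\bigr|\,a_{n,k}\le\gamma_m\,a_{n+m,l}$ for every $n$ (in all cases $p\in[1,\infty)$ and $p\in\{0,\infty\}$, since $\|e_r\|_{k,p}=a_{r,k}$), which is exactly the finiteness of the supremum in \eqref{eq: characterization topologizability B} (the bound being $\gamma_m$). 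One has to be mildly careful with the convention $\tfrac{\alpha}{0}=\infty$: if some $a_{n+m,l}=0$ while the numerator is nonzero, the inequality forces the numerator to vanish too, so the quotient is $\tfrac00=0$ by convention; this keeps the supremum finite.

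For (ii)$\Rightarrow$(i), fix $k$ and take the $l$ provided by \eqref{eq: characterization topologizability B}; for each $m$ set $\gamma_m:=\sup_{n}\bigl|\prod_{j=0}^{m-1}w_{n+j}\bigr|a_{n,k}/a_{n+m,l}<\infty$. Then for any $x\in\lambda_p(A)$ and every $n$ one has $\bigl|x_{n+m}\prod_{j=0}^{m-1}w_{n+j}\bigr|a_{n,k}\le\gamma_m\,|x_{n+m}|a_{n+m,l}$. Summing the $p$-th powers over $n\in\N_0$ (or taking the supremum, for $p\in\{0,\infty\}$) and using \eqref{seminorm Bm, p finite} resp. \eqref{seminorm Bm, p infty}, together with the fact that $\sum_{n}(|x_{n+m}|a_{n+m,l})^p\le\|x\|_l^p$ (resp. $\sup_n|x_{n+m}|a_{n+m,l}\le\|x\|_l$), yields $\|B_w^m x\|_k\le\gamma_m\|x\|_l$. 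This is precisely topologizability. I expect no serious obstacle here; the only points requiring a word are the bookkeeping with the two index shifts in the seminorm formulas and the degenerate-denominator convention, both of which are routine. The continuity hypothesis on $B_w$ is used only to guarantee that $B_w^m$ genuinely maps $\lambda_p(A)$ into itself so that the seminorm identities \eqref{seminorm Bm, p finite}–\eqref{seminorm Bm, p infty} are available.
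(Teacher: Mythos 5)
Your proposal is correct and follows essentially the same route as the paper's proof: evaluate the topologizability estimate at the unit vectors $e_{n+m}$ to get (ii), and conversely use the coordinatewise bound together with the seminorm formulas \eqref{seminorm Bm, p finite} and \eqref{seminorm Bm, p infty} to obtain $\Vert B_w^m x\Vert_k\leq\gamma_m\Vert x\Vert_l$, with part (b) handled mutatis mutandis. Your extra remark on the degenerate-denominator convention is a harmless refinement of the same argument.
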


\begin{proof}
We only give the proof of part (a) since the proof of part (b) is along the exact same lines. Thus, let $B_w$ be continuous on $\lambda_p(A)$. Assume $B_w$ is topologizable. Given $k\in\N_0$ there is $l\in\N_0$ such that for every $m\in\N_0$ there is $c_m>0$ with
\[ \Vert B_w^m x\Vert_k\leq c_m \Vert x\Vert_l, \]
for every $x\in\lambda_p(A)$. By \eqref{seminorm Bm, p finite} and \eqref{seminorm Bm, p infty}, respectively, taking $x=e_{n+m}$ for $n,m\in\N_0$ we obtain 
\[\Vert B^m e_{n+m}\Vert_k= \left|\prod_{j=1}^{m}w_{n+m-j}\right| a_{n,k} \leq c_m a_{n+m,l}. \]
Thus, (ii) follows.

Conversely, if (ii) is valid, fix $k\in \N_0$. Let $l\in\N_0$ be such that for every $m\in\N_0$ there is $c_m>0$ with
\[ \frac{\left|\prod_{j=0}^{m-1} w_{n+j}\right|a_{n,k}}{a_{n+m,l}}\leq c_m \]
for every $n\in\N_0$. For arbitrary $m\in\N_0$, by \eqref{seminorm Bm, p finite} and \eqref{seminorm Bm, p infty}, respectively, a straight forward calculation gives $\Vert B_w^m (x)\Vert_{k,p}\leq c_m\Vert x\Vert_{k,p}$.
Thus, $B_w$ is topologizable.
\end{proof}

\begin{prop}\label{prop: power bounded}
	For a K\"othe matrix $A$, $p\in [1,\infty]\cup\{0\}$, and $w\in\omega$ the following hold.
	\begin{itemize}
		\item[(a)] If $B_w$ is continuous on $\lambda_p(A)$ the following are equivalent.
		\begin{itemize}
			\item[(i)] $B_w$ is power bounded on $\lambda_p(A)$.
			\item[(ii)] For every $k\in\N_0$ there is $l\in \N_0$ such that
			\begin{equation}\label{eq: characterization power boundedness B}
				\sup_{n\in\N_0,m\in\N}\frac{\left|\prod_{j=0}^{m-1} w_{n+j}\right|a_{n,k}}{a_{n+m,l}}<\infty.
			\end{equation}
		\end{itemize} 
		\item[(b)] If $F_w$ is continuous on $\lambda_p(A)$ the following are equivalent.
		\begin{itemize}
			\item[(i)] $F_w$ is power bounded on $\lambda_p(A)$.
			\item[(ii)] For every $k\in\N_0$ there is $l\in \N_0$ such that
			\begin{equation}\label{eq: characterization powerboundedness B}
				\sup_{n\in\N_0, m\in\N}\frac{\left|\prod_{j=1}^{m} w_{n+j}\right|a_{n+m,k}}{a_{n,l}}<\infty.
			\end{equation}
		\end{itemize} 
	\end{itemize}
\end{prop}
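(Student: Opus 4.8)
The plan is to mirror the proof of Proposition~\ref{prop: topologizable} almost verbatim, the only change being that the clause ``for each $m$ there is $c_m>0$'' gets strengthened to a single constant valid for all $m$. Since $\lambda_p(A)$ is a Fréchet space with fundamental system of seminorms $(\Vert\cdot\Vert_{k,p})_{k\in\N_0}$ and every continuous seminorm on it is dominated by some $C\Vert\cdot\Vert_{l,p}$, power boundedness of $B_w$ (resp.\ $F_w$) is equivalent to the assertion that for every $k\in\N_0$ there are $l\in\N_0$ and $C>0$ with
\[\Vert B_w^m x\Vert_{k,p}\le C\Vert x\Vert_{l,p}\qquad(m\in\N,\ x\in\lambda_p(A)),\]
and analogously for $F_w$. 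As in the cited proposition I would only carry out part (a); part (b) is the same argument with \eqref{seminorm Fm, p finite} and \eqref{seminorm Fm, p infty} in place of \eqref{seminorm Bm, p finite} and \eqref{seminorm Bm, p infty}, testing against $e_n$ instead of $e_{n+m}$.

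For (i)$\Rightarrow$(ii) I would fix $k$, take the corresponding $l,C$ from the displayed estimate, and evaluate it at $x=e_{n+m}$: by \eqref{seminorm Bm, p finite} and \eqref{seminorm Bm, p infty} one gets $\Vert B_w^m e_{n+m}\Vert_{k,p}=\bigl|\prod_{j=0}^{m-1}w_{n+j}\bigr|a_{n,k}$ and $\Vert e_{n+m}\Vert_{l,p}=a_{n+m,l}$, hence $\bigl|\prod_{j=0}^{m-1}w_{n+j}\bigr|a_{n,k}\le C\,a_{n+m,l}$ for all $n\in\N_0$, $m\in\N$, which is precisely \eqref{eq: characterization power boundedness B}.

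For (ii)$\Rightarrow$(i) I would fix $k$, choose $l\in\N_0$ realising the finite supremum $C$ in \eqref{eq: characterization power boundedness B}, and substitute $\bigl|\prod_{j=0}^{m-1}w_{n+j}\bigr|a_{n,k}\le C\,a_{n+m,l}$ into \eqref{seminorm Bm, p finite}: for $p\in[1,\infty)$ and every $m\in\N$,
\[\Vert B_w^m x\Vert_{k,p}^p=\sum_{n=0}^\infty\Bigl|x_{n+m}\Bigl(\prod_{j=0}^{m-1}w_{n+j}\Bigr)a_{n,k}\Bigr|^p\le C^p\sum_{n=0}^\infty|x_{n+m}a_{n+m,l}|^p\le C^p\Vert x\Vert_{l,p}^p,\]
and for $p\in\{0,\infty\}$ the identical estimate with the sum replaced by a supremum follows from \eqref{seminorm Bm, p infty}. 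Thus $\Vert B_w^m x\Vert_{k,p}\le C\Vert x\Vert_{l,p}$ uniformly in $m\in\N$, and absorbing $C$ into the continuous seminorm $\Vert\cdot\Vert_{l,p}$ yields power boundedness. I do not expect any genuine obstacle here: this is the computation already carried out for topologizability, and the only bookkeeping is to run the cases $p\in[1,\infty)$ and $p\in\{0,\infty\}$ in parallel and to replace the $m$-dependent constants by the single bound coming from \eqref{eq: characterization power boundedness B}.
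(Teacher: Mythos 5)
Your proposal is correct and is essentially the paper's own argument: the paper proves part (a) by testing the uniform estimate $\Vert B_w^m x\Vert_k\le c\Vert x\Vert_l$ on the vectors $e_{n+m}$ for (i)$\Rightarrow$(ii) and by plugging the bound from \eqref{eq: characterization power boundedness B} into \eqref{seminorm Bm, p finite} and \eqref{seminorm Bm, p infty} for the converse, treating (b) as mutatis mutandis the same, exactly as you do. No gaps.
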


\begin{proof}
Again, we only present the proof of part (a) since the proof of (b) is mutatis mutandis the same. Hence, let $B_w$ be continuous on $\lambda_p(A)$ and  
assume that $B_w$ is power bounded. Then given $k\in\N_0$ there is $l\in\N_0$ and $c>0$ with
\[ \Vert B^m x\Vert_k\leq c \Vert x\Vert_l, \]
for every $x\in\lambda_p(B)$ and $m\in\N_0$. Evaluating this inequality for $x=e_{n+m}$ and using \eqref{seminorm Bm, p finite} and \eqref{seminorm Bm, p infty}, respectively, yields as in the proof of Proposition \ref{prop: topologizable} that (ii) holds.

Conversely, if (ii) is valid, fix $k\in \N_0$ and let $l\in\N_0$ be according to (ii) and let $c>0$ be such that
\[ \left|\prod_{j=0}^{m-1}w_{n+j}\right|a_{n,k}\leq c\, a_{n+m,l} \]
for every $n,m\in\N_0$. Using \eqref{seminorm Bm, p finite} and \eqref{seminorm Bm, p infty} it easily follows that
\[\forall\,x\in\lambda_p(A), m\in\N_0:\, \Vert B_w^m x\Vert_k\leq c\Vert x\Vert_l \]
so that $B$ is power bounded.
\end{proof}

\begin{remark}
	It should be noted that for $B_w$ and $F_w$ the properties of being correctly defined, continuous, topologizable, and power bounded on $\lambda_p(A)$ do not depend on the explicit value of $p\in[1,\infty]\cup\{0\}$.
\end{remark}

Next, we study Cesàro boundedness of weighted backward shifts $B_w$.

For $p\in[1,\infty)$ and $k\in\N_0$ it holds for $n\in\N$ and $x\in\lambda_p(A)$
\begin{equation}\label{seminorm B[n], p finite}
	\Vert B_w^{[n]} x\Vert_{k,p}^p=\sum_{j=0}^\infty\left|\frac{1}{n}\sum_{m=1}^n\left(\prod_{t=0}^{m-1}w_{j+t}\right)x_{j+m}a_{j,k}\right|^p
\end{equation}
while for $p=\infty$ we have
\begin{equation}\label{seminorm B[n], p infty}
	\Vert B_w^{[n]} x\Vert_{k,\infty}=\sup_{j\in\N_0}\left|\frac{1}{n}\sum_{m=1}^n\left(\prod_{t=0}^{m-1}w_{j+t}\right)x_{j+m}a_{j,k}\right|.
\end{equation}

\begin{prop}\label{prop:Cb for B}
	Let $A$ be a K\"othe matrix and let $p\in[1,\infty]\cup\{0\}$. Moreover, let $w\in\omega$ be such that the weighted backward shift $B_w$ is continuous on $\lambda_p(A)$. Consider the following conditions.
	\begin{itemize}
		\item[(i)] $(Cb_p^B)$ holds, where
		\begin{equation*}
			(Cb_p^B)=\begin{cases}\mbox{If }p\in[1,\infty):& \forall\,k\in\N_0\exists\,l\in\N_0:\,\sup_{r\in\N_0, n\in\N}\frac{\sum_{m=1}^n\left|\prod_{s=1}^{m}w_{r-s}\right|^p a_{r-m,k}^p}{n a_{r,l}^p}<\infty\\
				\mbox{If }p\in\{0,\infty\}:& \forall\,k\in\N_0\exists\,l\in\N_0:\,\sup_{r\in\N_0, n\in\N}\frac{\sum_{m=1}^n\left|\prod_{s=1}^{m}w_{r-s}\right| a_{r-m,k}}{n a_{r,l}}<\infty
			\end{cases}
		\end{equation*}
		\item[(ii)] $B_w$ is Cesàro bounded on $\lambda_p(A)$.
		\item[(iii)] For every $k\in\N_0$ there is $l\in\N_0$ such that
		$$\sup_{r\in\N_0,n\in\N}\frac{\left|\sum_{m=1}^{n}\left(\prod_{s=1}^{m}w_{r-s}\right)a_{r-m,k}\right|}{na_{r,l}}<\infty$$
	\end{itemize}
	Then, condition (i) implies (ii) and condition (ii) implies (iii).
\end{prop}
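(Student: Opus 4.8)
Both implications will be read off from the explicit coordinate‐wise description of the Cesàro means in \eqref{seminorm B[n], p finite}–\eqref{seminorm B[n], p infty}, together with the fact recalled in Section~\ref{sec: notation etc} that, since $\lambda_p(A)$ is a Fréchet space, $B_w$ is Cesàro bounded on $\lambda_p(A)$ exactly when $\{B_w^{[n]}:n\in\N\}$ is equicontinuous, i.e.\ exactly when for every $k\in\N_0$ there are $l\in\N_0$ and $C>0$ with $\Vert B_w^{[n]}x\Vert_k\le C\Vert x\Vert_l$ for all $x\in\lambda_p(A)$ and all $n\in\N$.

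For the implication (i)$\Rightarrow$(ii) with $p\in[1,\infty)$ the plan is as follows. Fix $k\in\N_0$ and let $l\in\N_0$ be the index furnished by $(Cb^B_p)$. Starting from \eqref{seminorm B[n], p finite}, apply in each coordinate $j$ the convexity (power–mean) inequality $\bigl|\tfrac1n\sum_{m=1}^n c_m\bigr|^p\le\tfrac1n\sum_{m=1}^n|c_m|^p$ with $c_m=\bigl(\prod_{t=0}^{m-1}w_{j+t}\bigr)x_{j+m}$, which gives
\[\Vert B_w^{[n]}x\Vert_{k,p}^p\;\le\;\frac1n\sum_{j=0}^{\infty}\sum_{m=1}^{n}a_{j,k}^p\Bigl|\prod_{t=0}^{m-1}w_{j+t}\Bigr|^p|x_{j+m}|^p .\]
All summands being non-negative, interchange the two sums and substitute $r:=j+m$; using $\prod_{t=0}^{m-1}w_{j+t}=\prod_{s=1}^{m}w_{r-s}$ this rewrites the right-hand side as $\sum_{r\ge 0}|x_r|^p\cdot\tfrac1n\sum_{m=1}^{\min(n,r)}\bigl|\prod_{s=1}^{m}w_{r-s}\bigr|^p a_{r-m,k}^p$, and the inner average is bounded by $C\,a_{r,l}^p$ precisely by $(Cb^B_p)$ (the terms with $m>r$ vanishing by the index convention). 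Summing over $r$ yields $\Vert B_w^{[n]}x\Vert_{k,p}\le C^{1/p}\Vert x\Vert_{l,p}$ uniformly in $n$, which is the required equicontinuity. For $p\in\{0,\infty\}$ one runs the same scheme starting from \eqref{seminorm B[n], p infty}, with the triangle inequality in place of the convexity step and the supremum over $j$ in place of the $\ell^p$–sum; after the same substitution $r=j+m$ the quantity controlled by $(Cb^B_p)$ reappears and yields $\Vert B_w^{[n]}x\Vert_k\le C\Vert x\Vert_l$.

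The implication (ii)$\Rightarrow$(iii) is the easy one: it suffices to test the equicontinuity inequality on the unit vectors $x=e_r$. By \eqref{seminorm B[n], p finite}/\eqref{seminorm B[n], p infty} the vector $B_w^{[n]}e_r$ is finitely supported, with $(B_w^{[n]}e_r)_{r-m}=\tfrac1n\prod_{s=1}^{m}w_{r-s}$ for $1\le m\le\min(n,r)$ and all other coordinates zero, while $\Vert e_r\Vert_l=a_{r,l}$. Since $\bigl|\sum_{m=1}^{n}\bigl(\prod_{s=1}^{m}w_{r-s}\bigr)a_{r-m,k}\bigr|\le\sum_{m=1}^{n}\bigl|\prod_{s=1}^{m}w_{r-s}\bigr|a_{r-m,k}$ is, up to the factor $n$, the pairing of $B_w^{[n]}e_r$ with the sequence $(a_{j,k})_j$, comparing it with $n$ times the appropriate seminorm of $B_w^{[n]}e_r$ and then using $\Vert B_w^{[n]}e_r\Vert_k\le C\Vert e_r\Vert_l=C\,a_{r,l}$ delivers exactly $\sup_{r,n}\bigl|\sum_{m=1}^{n}(\prod_{s=1}^{m}w_{r-s})a_{r-m,k}\bigr|\big/(n\,a_{r,l})<\infty$, i.e.\ (iii).

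The genuine work is concentrated in (i)$\Rightarrow$(ii): the main obstacle is the bookkeeping of the doubly indexed sums — keeping straight which index plays the role of the ``input'' coordinate $r$ and which the ``output'' coordinate $j=r-m$, carrying out the interchange of summations, and making sure that after the substitution $r=j+m$ the inner sum matches the precise form of $(Cb^B_p)$ (note in particular the factor $n$, rather than $n^p$, in its denominator, which is exactly what the convexity step produces and which is why a plain triangle inequality would be too lossy). The passages $p\in[1,\infty)$ versus $p\in\{0,\infty\}$ only differ in replacing the convexity inequality/$\ell^p$–sum by the triangle inequality/supremum, so once the finite $p$ case is written out carefully the remaining cases are routine.
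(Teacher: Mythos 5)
Your treatment of (i)$\Rightarrow$(ii) for $p\in[1,\infty)$ is correct and is exactly the paper's argument (Jensen coordinatewise, Tonelli to interchange the sums, reindex $r=j+m$, apply $(Cb^B_p)$). But the one sentence you devote to $p\in\{0,\infty\}$ hides a real problem: under a supremum there is no outer sum over $j$ to reindex. After the triangle inequality and the bound $|x_{j+m}|\le\Vert x\Vert_{l,\infty}/a_{j+m,l}$, what you actually face is
\[\sup_{j\in\N_0}\frac{1}{n}\sum_{m=1}^{n}\Bigl|\prod_{t=0}^{m-1}w_{j+t}\Bigr|\frac{a_{j,k}}{a_{j+m,l}},\]
where the $k$-weight sits at the \emph{fixed} index $j$ and the $l$-weight at the \emph{varying} indices $j+m$. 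The quantity controlled by $(Cb^B_p)$ is the transposed one, $\sup_{r}\frac{1}{n}\sum_{m}\bigl|\prod_{s=1}^{m}w_{r-s}\bigr|a_{r-m,k}/a_{r,l}$, with the $l$-weight at the fixed index $r$ and the $k$-weights at the varying indices $r-m$. A supremum of such ``row sums'' is in general only bounded by $n$ times the supremum of the ``column sums'', so from $(Cb^B_p)$ your scheme yields $\Vert B_w^{[n]}x\Vert_{k,\infty}\le Cn\Vert x\Vert_{l,\infty}$, which is not equicontinuity; the claimed reappearance of $(Cb^B_p)$ after ``the same substitution'' does not happen, and this case is left unproved by your argument.

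The implication (ii)$\Rightarrow$(iii) is likewise only established for $p=1$. Testing on $e_r$ is fine and your description of $B_w^{[n]}e_r$ is correct, but the comparison step ``$\sum_{m=1}^{n}\bigl|\prod_{s=1}^{m}w_{r-s}\bigr|a_{r-m,k}\le n\Vert B_w^{[n]}e_r\Vert_k$'' is an identity precisely when $\Vert\cdot\Vert_k$ is the $\ell^1$-type seminorm. For $p\in(1,\infty)$, H\"older over the at most $\min(n,r)$ nonzero coordinates gives only $\le n^{1-1/p}\bigl(\sum_m|\cdots|^pa_{r-m,k}^p\bigr)^{1/p}=n^{2-1/p}\Vert B_w^{[n]}e_r\Vert_{k,p}$, and for $p\in\{0,\infty\}$ only $\le n\cdot n\Vert B_w^{[n]}e_r\Vert_{k,\infty}$; feeding in the Ces\`aro bound then gives $\bigl|\sum_m(\prod_{s=1}^m w_{r-s})a_{r-m,k}\bigr|/(na_{r,l})\le Cn^{1-1/p}$, resp.\ $Cn$, not a bound uniform in $n$. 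That the comparison genuinely fails is seen already for $w\equiv1$, $a_{j,k}\equiv1$, $p=\infty$, $r\ge n$: the left side equals $n$ while $n\Vert B_w^{[n]}e_r\Vert_{k,\infty}=1$. The structural reason is that the sum over $m$ in (iii) runs over \emph{distinct} output coordinates $r-m$, i.e.\ it is the pairing with the sequence $(a_{j,k})_j$, which is an equicontinuous functional only for the $\ell^1$ seminorms; for $p>1$ and for the sup-seminorms some genuinely different idea would be required. In fairness, the paper's own proof takes the same two shortcuts (it swaps the roles of the indices under the supremum in the $p\in\{0,\infty\}$ case, and it evaluates $\Vert B_w^{[n]}e_r\Vert_{k,p}$ as $\frac1n\bigl|\sum_{m=1}^n(\prod_{s=1}^m w_{r-s})a_{r-m,k}\bigr|$, which is exact only for $p=1$), so you have faithfully reproduced the published scheme; but as a standalone proof your proposal covers the full statement only in the case $p=1$, and the cases $p\in(1,\infty)\cup\{0,\infty\}$ remain open in both steps indicated above.
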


\begin{proof}
We first show that (ii) implies (iii). In order to do so, keeping in mind that by our convention $a_{s,k}=0$ for $s\in\Z\backslash\N_0$, we evaluate \eqref{seminorm B[n], p finite} and \eqref{seminorm B[n], p infty} for $x=e_r$ and obtain
$$\Vert B_w^{[n]} e_r\Vert_{k,p}^p=\frac{1}{n^p}\left|\sum_{m=1}^n\left(\prod_{t=0}^{m-1}w_{r-m+t}\right)a_{r-m,k}\right|^p$$
in case $p\in[1,\infty)$, respectively
$$\Vert B_w^{[n]} e_r\Vert_{k,\infty}=\left|\frac{1}{n}\sum_{m=1}^n\left(\prod_{t=0}^{m-1}w_{r-m+t}\right)a_{r-m,k}\right|$$
for $p=\infty$. Thus, Cesàro boundedness of $B_w$ on $\lambda^p(A)$ implies that for $k\in\N_0$ there are $l\in\N_0$ and $c>0$ such that
$$\forall\,r\in\N_0, n\in\N:\,c\,a_{r,l}\geq \left|\frac{1}{n}\sum_{m=1}^n\left(\prod_{t=0}^{m-1}w_{r-m+t}\right)a_{r-m,k}\right|=\left|\frac{1}{n}\sum_{m=1}^n\left(\prod_{s=1}^{m}w_{r-s}\right)a_{r-m,k}\right|$$
which proves (iii).

Now, assume that (i) holds for $p\in[1,\infty)$. Fix $k\in\N_0$ and choose $l\in\N_0$ according to $(Cb_p^B)$. For $n\in\N$ and $x\in\lambda_p(A)$ it follows with the convexity of the function $[0,\infty)\rightarrow\R, t\mapsto t^p$ 
\begin{eqnarray*}
	\Vert B_w^{[n]}x\Vert_{k,p}^p&\leq& \sum_{j=0}^\infty\frac{1}{n}\sum_{m=1}^n\left(\left|\prod_{t=0}^{m-1}w_{j+t}\right||x_{j+m}| a_{j,k}\right)^p\\
	&=&\sum_{r=0}^\infty\left(\frac{1}{n}\sum_{m=1}^n\left|\prod_{t=0}^{m-1}w_{r-m+t}\right|^p a_{r-m,k}^p\right)|x_r|^p\\
	&=&\sum_{r=0}^\infty\frac{\sum_{m=1}^n\left|\prod_{s=1}^{m}w_{r-s}\right|^p a_{r-m,k}^p}{n a_{r,l}^p}(|x_r|a_{r,l})^p\\
	&\leq&\sup_{r\in\N_0, n\in\N}\frac{\sum_{m=1}^n\left|\prod_{s=1}^{m}w_{r-s}\right|^p a_{r-m,k}^p}{n a_{r,l}^p} \Vert x\Vert_{l,p}.
\end{eqnarray*}
Since by assumption $(Cb_p^B)$ the supremum is finite we conclude that $B_w$ is Cesàro bounded on $\lambda_p(A)$.

Finally, if (i) holds for $p\in\{0,\infty\}$, let $k\in\N_0$ be arbitrary. Choosing $l\in\N_0$ according to $(Cb_p^B)$ implies for $n\in\N$ and $x\in\lambda_p(A)$
\begin{eqnarray*}
	\Vert B_w^{[n]}x\Vert_{k,p}&=&\sup_{j\in\N_0} \frac{1}{n}\left|\sum_{m=1}^n\left(\prod_{t=0}^{m-1}w_{j+t}\right)x_{j+m}\right|a_{j,k}\\
	&\leq&\sup_{j\in\N_0} \frac{\sum_{m=1}^n\left|\prod_{t=0}^{m-1}w_{j-m+t}\right|a_{j-m,k}}{n a_{j,l}}|x_j| a_{j,l}\\
	&=&\sup_{j\in\N_0} \frac{\sum_{m=1}^n\left|\prod_{s=1}^{m}w_{j-s}\right|a_{j-m,k}}{n a_{j,l}}|x_j| a_{j,l}\\
	&\leq&\sup_{r\in\N_0,n\in\N}  \frac{\sum_{m=1}^n\left|\prod_{s=1}^{m}w_{r-s}\right|a_{r-m,k}}{n a_{r,l}}\Vert x\Vert_{l,p}\\
\end{eqnarray*}
Since again by assumption $(Cb_p^B)$ the supremum is finite we conclude that $B_w$ is Cesàro bounded on $\lambda_p(A)$ for $p\in\{0,\infty\}$ which proves the proposition.
\end{proof}

As an immediate consequence of the previous proposition we obtain the following result.

\begin{coro}\label{cor: Cb for non-negative weights}
	Let $A$ be a K\"othe matrix and let $w\in\omega$ be such that $w_n\geq 0$, $n\in\N_0$, and such that the weighted backward shift $B_w$ is continuous on $\lambda_p(A)$. Then, the following are equivalent.
	\begin{itemize}
		\item[(i)] $B_w$ is Cesàro bounded on any/each of the spaces $\lambda_0(A),\lambda_1(A)$, or $\lambda_\infty(A)$.
		\item[(ii)] For every $k\in\N_0$ there is $l\in\N_0$ such that
		$$\sup_{r\in\N_0, n\in\N}\frac{\sum_{m=1}^n\left(\prod_{s=1}^{m}w_{r-s}\right) a_{r-m,k}}{n a_{r,l}}<\infty.$$
	\end{itemize}
	If additionally, $w_n\leq 1$, $n\in\N_0$, and if the K\"othe matrix $A=(a_{n,k})_{n,k\in\N_0}$ satisfies that $(a_{n,k})_{n\in\N_0}$ is increasing for every $k\in\N_0$, the equivalent conditions (i) and (ii) are also equivalent to condition (iii):
	\begin{itemize}
		\item[(iii)] $B_w$ is power bounded on any/each of the spaces $\lambda_0(A),\lambda_1(A)$, or $\lambda_\infty(A)$.
	\end{itemize}
\end{coro}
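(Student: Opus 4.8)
The plan is to deduce the corollary directly from Propositions \ref{prop:Cb for B} and \ref{prop: power bounded} together with the Remark on $p$-independence following Proposition \ref{prop: power bounded}. The observation that makes this work is that, for non-negative weights, the conditions occurring in Proposition \ref{prop:Cb for B} collapse. Indeed, since $w_n\geq 0$ and $a_{n,k}\geq 0$, every product $\prod_{s=1}^m w_{r-s}$ and every partial sum $\sum_{m=1}^n(\prod_{s=1}^m w_{r-s})a_{r-m,k}$ is non-negative, so the absolute values in conditions (i) and (iii) of Proposition \ref{prop:Cb for B} are redundant; moreover, for $p=1$ the exponent inside $(Cb_1^B)$ is harmless (as $t^1=t$ for $t\geq 0$) and for $p\in\{0,\infty\}$ there is no exponent at all. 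Hence, for each $p\in\{0,1,\infty\}$, both $(Cb_p^B)$ and condition (iii) of Proposition \ref{prop:Cb for B} are \emph{literally} condition (ii) of the present corollary. (This is precisely why the statement restricts to $\lambda_0,\lambda_1,\lambda_\infty$: for $p\in(1,\infty)$ the $p$-th powers in $(Cb_p^B)$ obstruct the collapse.)

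Granting this, the chain ``(i)$\Rightarrow$(ii)$\Rightarrow$(iii)'' of Proposition \ref{prop:Cb for B} reads, for each fixed $p\in\{0,1,\infty\}$ (note that $B_w$ is continuous on every $\lambda_p(A)$ by the Remark, so the proposition applies): [condition (ii) here] $\Rightarrow$ [$B_w$ Ces\`aro bounded on $\lambda_p(A)$] $\Rightarrow$ [condition (ii) here]. Thus, for each such $p$, $B_w$ is Ces\`aro bounded on $\lambda_p(A)$ if and only if condition (ii) holds; since condition (ii) does not involve $p$, this simultaneously yields the equivalence (i)$\Leftrightarrow$(ii) and the ``any/each'' clause.

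For the final equivalence I would work under the extra hypotheses $w_n\leq 1$ and $(a_{n,k})_{n\in\N_0}$ increasing. Here (iii)$\Rightarrow$(i) is immediate: power bounded operators on a lcHs are Ces\`aro bounded, and power boundedness is $p$-independent by the Remark. For (ii)$\Rightarrow$(iii) I would use Proposition \ref{prop: power bounded}(a): power boundedness of $B_w$ is equivalent to \eqref{eq: characterization power boundedness B} (for every $k$ with a suitable $l$), and substituting $r=n+m$ (using $a_{s,k}=0$ for $s<0$, so the terms with $m>r$ vanish) this becomes: for every $k$ there is $l$ with $\sup_{r\in\N_0,\,m\in\N} b^{(r,k)}_m/a_{r,l}<\infty$, where $b^{(r,k)}_m:=(\prod_{s=1}^m w_{r-s})a_{r-m,k}$. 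The key point is that for fixed $r,k$ the sequence $m\mapsto b^{(r,k)}_m$ is non-increasing: when $b^{(r,k)}_m>0$ one has $b^{(r,k)}_{m+1}/b^{(r,k)}_m=w_{r-m-1}\,a_{r-m-1,k}/a_{r-m,k}\leq 1$ because $0\leq w_{r-m-1}\leq 1$ and $a_{r-m-1,k}\leq a_{r-m,k}$, and the degenerate cases (a vanishing weight, $a_{r-m,k}=0$, or $r-m-1<0$) are clear. Consequently $\sum_{m=1}^n b^{(r,k)}_m\geq n\,b^{(r,k)}_n$, so choosing $l$ for $k$ according to condition (ii) yields
\[
\sup_{r\in\N_0,\,n\in\N}\frac{b^{(r,k)}_n}{a_{r,l}}\;\leq\;\sup_{r\in\N_0,\,n\in\N}\frac{\sum_{m=1}^n b^{(r,k)}_m}{n\,a_{r,l}}\;=\;\sup_{r\in\N_0,\,n\in\N}\frac{\sum_{m=1}^n\big(\prod_{s=1}^m w_{r-s}\big)a_{r-m,k}}{n\,a_{r,l}}\;<\;\infty,
\]
which (after renaming $n$ as $m$) is exactly the estimate needed; hence $B_w$ is power bounded, i.e.\ (iii) holds.

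The step I expect to be the genuine, if modest, obstacle is the monotonicity of $m\mapsto b^{(r,k)}_m$ in the last paragraph — this is where \emph{both} additional hypotheses ($w_n\leq 1$ and ``$(a_{n,k})_n$ increasing'') are actually used — together with the careful bookkeeping of the index shift $r=n+m$ and of the degenerate cases permitted by the conventions $\frac{0}{0}=0$ and $a_{s,k}=0$ for $s<0$. Everything else is routine translation through Propositions \ref{prop:Cb for B} and \ref{prop: power bounded} and the $p$-independence Remark.
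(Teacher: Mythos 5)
Your proposal is correct and follows essentially the same route as the paper: the equivalence of (i) and (ii) is read off from Proposition \ref{prop:Cb for B} after noting that for non-negative weights its conditions (i) and (iii) both reduce to condition (ii) of the corollary, and the implication (ii)$\Rightarrow$(iii) under the extra hypotheses is obtained exactly as in the paper, by using $w_n\leq 1$ and the monotonicity of $(a_{n,k})_{n}$ to bound the $n$-th (smallest) summand by the Ces\`aro average and then invoking Proposition \ref{prop: power bounded}(a), with (iii)$\Rightarrow$(i) being the standard fact that power bounded operators are Ces\`aro bounded. Your explicit verification of the monotonicity of $m\mapsto\bigl(\prod_{s=1}^{m}w_{r-s}\bigr)a_{r-m,k}$ and of the index substitution merely spells out what the paper leaves implicit.
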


\begin{proof}
	That (i) and (ii) are equivalent follows immediately from Proposition \ref{prop:Cb for B}. Now, assume that the additional hypothesis on $w$ and $A$ hold. We fix $k\in\N_0$. From (ii) we conclude the existence of $l\in\N_0$ such that
	\begin{eqnarray*}
		\infty&>&\sup_{r\in\N_0, n\in\N}\frac{\sum_{m=1}^n\left(\prod_{s=1}^{m}w_{r-s}\right) a_{r-m,k}}{n a_{r,l}}\geq\sup_{r\in\N_0, n\in\N}\frac{\sum_{m=1}^n\left(\prod_{s=1}^{n}w_{r-s}\right) a_{r-n,k}}{n a_{r,l}}\\
		&=&\sup_{r\in\N_0, n\in\N}\frac{\left(\prod_{s=1}^{n}w_{r-n+n-s}\right) a_{r-n,k}}{a_{r-n+n,l}}=\sup_{j\in\N_0, n\in\N}\frac{\left(\prod_{t=0}^{n-1}w_{j+t}\right) a_{j,k}}{a_{j+n,l}}.
	\end{eqnarray*}
	Proposition \ref{prop: power bounded} (a) implies that $B_w$ is power bounded. On the other hand, every power bounded operator is Ces\`aro bounded which proves the statement.
\end{proof}

\begin{coro}\label{cor: sufficiency for mean ergodicity of B, p finite}
	Let $A$ be a K\"othe matrix and let $p\in (1,\infty)$. Let $w\in\omega$ be such that $w_n\geq 0$, $n\in\N_0$, and such that the weighted backward shift $B_w$ is continuous on $\lambda_p(A)$. Consider the following conditions.
	\begin{itemize}
		\item[(i)] For every $k\in\N_0$ there is $l\in\N_0$ such that
		$$\sup_{r\in\N_0, n\in\N}\frac{\sum_{m=1}^n\left(\prod_{s=1}^{m}w_{r-s}\right)^p a_{r-m,k}^p}{n a_{r,l}^p}<\infty.$$
		\item[(ii)] $B_w$ is mean ergodic on $\lambda_p(A)$.
		\item[(iii)]  For every $k\in\N_0$ there is $l\in\N_0$ such that
		$$\sup_{r\in\N_0, n\in\N}\frac{\sum_{m=1}^n\left(\prod_{s=1}^{m}w_{r-s}\right) a_{r-m,k}}{n a_{r,l}}<\infty.$$
	\end{itemize}
	Then, condition (i) implies (ii) and condition (ii) implies (iii).
\end{coro}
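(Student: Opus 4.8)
The plan is to deduce both implications from results already available: the Cesàro-boundedness characterisation of $B_w$ in Proposition~\ref{prop:Cb for B}, the mean-ergodicity criterion of Theorem~\ref{theo: mean ergodic}, and the fact that $\lambda_p(A)$ is a reflexive Fréchet space for $1<p<\infty$ (it embeds topologically as a closed subspace of a countable product of reflexive Banach spaces, hence is reflexive, and in particular barrelled).

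\emph{Implication (ii)$\Rightarrow$(iii).} Since $\lambda_p(A)$ is barrelled, a mean ergodic operator on it is Cesàro bounded (as recalled in Section~\ref{sec: notation etc}). Thus $B_w$ is Cesàro bounded, so the implication (ii)$\Rightarrow$(iii) of Proposition~\ref{prop:Cb for B} applies and yields, for each $k\in\N_0$, an $l\in\N_0$ with $\sup_{r\in\N_0,\,n\in\N}\bigl|\sum_{m=1}^n(\prod_{s=1}^m w_{r-s})a_{r-m,k}\bigr|/(n a_{r,l})<\infty$. As $w_n\ge 0$ and all entries of $A$ are non-negative, the sums inside the modulus are non-negative, so the modulus can be dropped, which is exactly condition (iii).

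\emph{Implication (i)$\Rightarrow$(ii).} I verify the two hypotheses of Theorem~\ref{theo: mean ergodic}. First, since $w_n\ge 0$, condition (i) is literally condition $(Cb_p^B)$ of Proposition~\ref{prop:Cb for B} in the case $p\in[1,\infty)$, so that proposition gives that $B_w$ is Cesàro bounded on $\lambda_p(A)$. Second, I claim $\tfrac1n B_w^n x\to 0$ in $\lambda_p(A)$ for every $x$. Fix $k\in\N_0$ and take $l\in\N_0$, $C>0$ as in (i); keeping only the summand $m=n$ in the numerator of (i) gives
\[
\left(\prod_{s=1}^n w_{r-s}\right)^p a_{r-n,k}^p\;\le\; C\,n\,a_{r,l}^p\qquad (r\in\N_0,\ n\in\N),
\]
trivially so for $r<n$ by the convention $a_{s,k}=0$ for $s\in\Z\setminus\N_0$. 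Substituting this into the seminorm formula \eqref{seminorm Bm, p finite} we obtain, for $x\in\lambda_p(A)$,
\[
\frac{1}{n^p}\Vert B_w^n x\Vert_{k,p}^p=\sum_{r=n}^\infty |x_r|^p\,\frac{\left(\prod_{s=1}^n w_{r-s}\right)^p a_{r-n,k}^p}{n^p}\;\le\;\frac{C}{n^{p-1}}\sum_{r=n}^\infty |x_r|^p a_{r,l}^p\;\le\;\frac{C}{n^{p-1}}\Vert x\Vert_{l,p}^p,
\]
and since $p>1$ the right-hand side tends to $0$ as $n\to\infty$; as $k$ was arbitrary, $\tfrac1n B_w^n x\to 0$ in $\lambda_p(A)$. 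Reflexivity of $\lambda_p(A)$ then permits invoking Theorem~\ref{theo: mean ergodic}, which gives that $B_w$ is mean ergodic.

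I do not anticipate a real obstacle: the argument merely assembles Proposition~\ref{prop:Cb for B} and Theorem~\ref{theo: mean ergodic}. The two points needing some care are recording that $\lambda_p(A)$ is reflexive for $1<p<\infty$ (required to apply Theorem~\ref{theo: mean ergodic}), and noting that the null-sequence step genuinely uses $p>1$: the decay $n^{-(p-1)}\to 0$ is precisely what forces $\tfrac1n B_w^n x\to 0$, which explains why the statement is restricted to $p\in(1,\infty)$ rather than to $p\in[1,\infty)$.
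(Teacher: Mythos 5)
Your proof is correct, and its overall skeleton coincides with the paper's: for (ii)$\Rightarrow$(iii) both arguments use barrelledness to get Ces\`aro boundedness from mean ergodicity and then quote Proposition~\ref{prop:Cb for B}, dropping the modulus by non-negativity; for (i)$\Rightarrow$(ii) both get Ces\`aro boundedness from Proposition~\ref{prop:Cb for B} and then invoke reflexivity of $\lambda_p(A)$ together with Theorem~\ref{theo: mean ergodic}. Where you genuinely diverge is the verification that $\frac1n B_w^n x\to 0$: the paper argues softly, noting that Ces\`aro boundedness and $\frac1n B_w^n=B_w^{[n]}-\frac{n-1}{n}B_w^{[n-1]}$ make $\{\frac1n B_w^n\}$ equicontinuous, that $\frac1n B_w^n x=0$ eventually for $x\in\spn\{e_r:r\in\N_0\}$, and that this span is dense, so equicontinuity transfers the convergence to all of $\lambda_p(A)$; you instead extract the $m=n$ summand from (i) and plug it into \eqref{seminorm Bm, p finite} to get the quantitative bound $\frac1{n^p}\Vert B_w^nx\Vert_{k,p}^p\leq C n^{1-p}\Vert x\Vert_{l,p}^p$. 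Your estimate is self-contained and explicit, but it only works because $p>1$; the paper's density argument is valid for all $p\in[1,\infty)\cup\{0\}$, which makes transparent that the restriction to $p\in(1,\infty)$ in the corollary comes solely from the reflexivity needed for Theorem~\ref{theo: mean ergodic} (for $p=1$ reflexivity forces the Montel case, treated in Theorem~\ref{theo: mean ergodicity for the Montel case}), and not, as your closing remark suggests, from the null-sequence step itself --- that is a mis-attribution in your commentary, though not a gap in the proof. The same soft argument is also what the paper reuses verbatim for $\lambda_0(A)$ and $\lambda_1(A)$ later, which your $n^{1-p}$ estimate could not cover.
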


\begin{proof}
	Assume that (i) holds. By Proposition \ref{prop:Cb for B}, $B_w$ is Cesàro bounded on $\lambda_p(A)$, i.e.\ $\{B_w^{[n]};n\in\N\}$ is equicontinuous on $\lambda_p(A)$. Because $\frac{1}{n}B_w^n=B_w^{[n]}-\frac{n-1}{n}B_w^{[n-1]}$ the set $\{\frac{1}{n}B_w^n;n\in\N\}$ is equicontinuous on $\lambda_p(A)$, too. Moreover, $\mbox{span}\{e_r;r\in\N_0\}$ is dense in $\lambda_p(A)$ and obviously for each $x\in \mbox{span}\{e_r;r\in\N_0\}$ it holds $\frac{1}{n}B_w^n x=0$ for sufficiently large $n$. From equicontinuity it follows that $\left(\frac{1}{n}B_w^n x\right)_{n\in\N}$ tends to 0 for every $x\in\lambda_p(A)$. Since $\lambda_p(A)$ is reflexive (see, e.g.\ \cite[Proposition 27.3]{MeVo1997}) Theorem \ref{theo: mean ergodic} implies that $B_w$ is mean ergodic on $\lambda_p(A)$.
	
	On the other hand, if (ii) holds, refereing to Theorem \ref{theo: mean ergodic} again, it follows that $B_w$ is Cesàro bounded on $\lambda_p(A)$ so that (iii) follows from Proposition \ref{prop:Cb for B}.
\end{proof}

Before we continue with a result characterizing mean ergodicity of $B_w$ on $\lambda_0(A)$ and $\lambda_1(A)$, we recall that by the Dieudonn\'e-Gomes Theorem (see e.g.\ \cite[Theorem 27.9]{MeVo1997}) $\lambda_1(A)$ is reflexive if and only if $\lambda_p(A)$ is a Montel space for all/some $p\in [1,\infty]$ if and only if $\lambda_0(A)=\lambda_\infty(A)$ and that these properties are equivalent to
\begin{equation}\label{eq: characteristic for Montel}
	\forall\,I\subseteq\N_0\mbox{ infinite}, k\in\N_0\,\exists\,l\in\N_0:\,\inf_{n\in I}\frac{a_{n,k}}{a_{n,l}}=0.
\end{equation}
Moreover, recall that the above properties are equivalent to $\lambda_0(A)$ being a Montel space (see e.g.\ \cite[Proposition 27.15]{MeVo1997}).

\begin{theo}\label{theo: mean ergodicity for the Montel case}
	Let $w\in\omega$ be such that $w_n\geq 0$, $n\in\N_0$. Moreover, let $A$ be a K\"othe matrix such that $\lambda^p(A)$ is a Montel space and such that the weighted backward shift $B_w$ is continuous on $\lambda_p(A)$ for some/all $p\in[1,\infty]\cup\{0\}$. Then, the following are equivalent.
	\begin{itemize}
		\item[(i)] $B_w$ is Cesàro bounded on one /each of the spaces $\lambda_0(A)$ or $\lambda_1(A)$.
		\item[(ii)] For every $k\in\N_0$ there is $l\in\N_0$ such that
		$$\sup_{r\in\N_0, n\in\N}\frac{\sum_{m=1}^n\left(\prod_{s=1}^{m}w_{r-s}\right) a_{r-m,k}}{n a_{r,l}}<\infty.$$
		\item[(iii)] $B_w$ is (uniformly) mean ergodic on either of the spaces $\lambda_0(A)$ or $\lambda_1(A)$.
	\end{itemize}
	If additionally, $w_n\leq 1$, $n\in\N_0$, and if the K\"othe matrix $A=(a_{n,k})_{n,k\in\N_0}$ satisfies that $(a_{n,k})_{n\in\N_0}$ is increasing for every $k\in\N_0$, the above conditions are also equivalent to condition (iv):
	\begin{itemize}
		\item[(iv)] $B_w$ is power bounded on either of the spaces $\lambda_0(A)$ and $\lambda_1(A)$.
	\end{itemize}
\end{theo}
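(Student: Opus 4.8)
The plan is to deduce the theorem from the results already established in this section together with the abstract criteria recalled in Section~\ref{sec: notation etc}; the genuinely new content is essentially bookkeeping.

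The equivalence of (i) and (ii) is nothing but Corollary~\ref{cor: Cb for non-negative weights} applied to $\lambda_0(A)$ and $\lambda_1(A)$ (recall that, by the remark following Proposition~\ref{prop: power bounded}, continuity of $B_w$ does not depend on $p$, so the standing hypothesis is available on both spaces, and note that $\lambda_0(A)=\lambda_\infty(A)$ in the Montel case). The implication (iii)$\Rightarrow$(i) is equally immediate: $\lambda_0(A)$ and $\lambda_1(A)$ are Fr\'echet, hence barrelled, and on a barrelled space every mean ergodic operator is Ces\`aro bounded, as observed in Section~\ref{sec: notation etc}.

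The main implication is (ii)$\Rightarrow$(iii), where we repeat the reasoning from the proof of Corollary~\ref{cor: sufficiency for mean ergodicity of B, p finite}. Fix $E\in\{\lambda_0(A),\lambda_1(A)\}$; note first that $B_w$ indeed restricts to a continuous operator on $E$ (for $\lambda_0(A)$ this is because $B_we_r\in\spn\{e_r:r\in\N_0\}$, so $B_w$ maps $\overline{\spn\{e_r:r\in\N_0\}}=\lambda_0(A)$ into itself). By Corollary~\ref{cor: Cb for non-negative weights}, condition (ii) implies that $B_w$ is Ces\`aro bounded on $E$, i.e.\ $\{B_w^{[n]}:n\in\N\}$ is equicontinuous on $E$. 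Since $\frac1n B_w^n=B_w^{[n]}-\frac{n-1}{n}B_w^{[n-1]}$, the family $\{\frac1n B_w^n:n\in\N\}$ is equicontinuous as well. On the dense subspace $\spn\{e_r:r\in\N_0\}$ of $E$ one has $\frac1n B_w^n x=0$ for all sufficiently large $n$, so equicontinuity forces $\frac1n B_w^n x\to 0$ for every $x\in E$. As $\lambda_p(A)$ is a Montel space, both $\lambda_0(A)$ and $\lambda_1(A)$ are Montel (hence reflexive), so Theorem~\ref{theo: mean ergodic} yields that $B_w$ is mean ergodic on $E$, and Theorem~\ref{theo:ergodicity and transposed}(a) upgrades this to uniform mean ergodicity. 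This closes the cycle (i)$\Leftrightarrow$(ii)$\Leftrightarrow$(iii).

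For the addendum, a power bounded operator is always Ces\`aro bounded, so (iv)$\Rightarrow$(i); conversely, under the stated monotonicity hypotheses on $A$ and $w$, Corollary~\ref{cor: Cb for non-negative weights} gives precisely that (i)/(ii) imply power boundedness of $B_w$ on $\lambda_0(A)$ and $\lambda_1(A)$. Hence (iv) is equivalent to (i), (ii) and (iii). I do not expect a real obstacle here: the only points that need attention are checking that $B_w$ acts continuously on the closed subspace $\lambda_0(A)$ and that the abstract hypotheses invoked along the way—reflexivity, density of the finitely supported sequences, and the Montel property used in Theorem~\ref{theo:ergodicity and transposed}—are genuinely available in the present setting.
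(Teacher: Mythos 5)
Your proposal is correct and follows essentially the same route as the paper: (i)$\Leftrightarrow$(ii) via Corollary~\ref{cor: Cb for non-negative weights}, (iii)$\Rightarrow$(i) by barrelledness, (ii)$\Rightarrow$(iii) by repeating the equicontinuity/density argument of Corollary~\ref{cor: sufficiency for mean ergodicity of B, p finite} with the Montel-space results, and the addendum again from Corollary~\ref{cor: Cb for non-negative weights}. The only cosmetic difference is that you pass through Theorem~\ref{theo: mean ergodic} and then upgrade via Theorem~\ref{theo:ergodicity and transposed}(a), while the paper invokes Theorem~\ref{theo:ergodicity and transposed}(b) directly; this is the same argument in substance.
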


\begin{proof}
	The equivalence of (i) and (ii) holds by Corollary \ref{cor: Cb for non-negative weights}. Moreover, since Fr\'echet spaces are barrelled, mean ergodic operators on Fr\'echet spaces are Cesàro bounded, so that condition (iii) implies (i).
	
	Next, if (ii) holds, refering to Theorem \ref{theo:ergodicity and transposed} instead of Theorem \ref{theo: mean ergodic}, it follows as in the proof of Corollary \ref{cor: sufficiency for mean ergodicity of B, p finite} that $B_w$ is uniformly mean ergodic on $\lambda_0(A)$ and $\lambda_1(A)$.
	
	In case that the additional properties of $A$ and $w$ are satisfied it follows from Corollary \ref{cor: Cb for non-negative weights} that (i) and (iv) are equivalent.
\end{proof}

In the remainder of this section we study Cesàro boundedness and mean ergodicity of weighted forward shifts $F_w$. For $p\in[1,\infty)$ and $k\in\N_0$ it holds for $n\in\N$ and $x\in\lambda_p(A)$
\begin{equation}\label{seminorm F[n], p finite}
	\Vert F_w^{[n]} x\Vert_{k,p}^p=\sum_{j=0}^\infty\left|\frac{1}{n}\sum_{m=1}^n\left(\prod_{t=0}^{m-1}w_{j-t}\right)x_{j-m}a_{j,k}\right|^p
\end{equation}
while for $p=\infty$ we have
\begin{equation}\label{seminorm F[n], p infty}
	\Vert F_w^{[n]} x\Vert_{k,\infty}=\sup_{j\in\N_0}\left|\frac{1}{n}\sum_{m=1}^n\left(\prod_{t=0}^{m-1}w_{j-t}\right)x_{j-m}a_{j,k}\right|
\end{equation}

Using \eqref{seminorm F[n], p finite} and \eqref{seminorm F[n], p infty} instead of \eqref{seminorm B[n], p finite} and \eqref{seminorm B[n], p infty}, respectively, the proof of the next proposition is done precisely as the one of Proposition \ref{prop:Cb for B} and is therefore omitted.

\begin{prop}\label{prop:Cb for F}
	Let $A$ be a K\"othe matrix and let $p\in[1,\infty]\cup\{0\}$. Moreover, let $w\in\omega$ be such that the weighted forward shift $F_w$ is continuous on $\lambda_p(A)$. Consider the following conditions.
	\begin{itemize}
		\item[(i)] $(Cb_p^F)$ holds, where
		\begin{equation*}
			(Cb_p^F)=\begin{cases}\mbox{If }p\in[1,\infty):& \forall\,k\in\N_0\exists\,l\in\N_0:\,\sup_{r\in\N_0,n\in\N}\frac{\sum_{m=1}^{n}\left|\prod_{s=1}^{m}w_{r+s}\right|^p a_{r+m,k}^p}{na_{r,l}^p}<\infty\\
				\mbox{If }p\in\{0,\infty\}:& \forall\,k\in\N_0\exists\,l\in\N_0:\,\sup_{r\in\N_0,n\in\N}\frac{\sum_{m=1}^{n}\left|\prod_{s=1}^{m}w_{r+s}\right|a_{r+m,k}}{n a_{r,l}}<\infty
			\end{cases}
		\end{equation*}
		\item[(ii)] $F_w$ is Cesàro bounded on $\lambda_p(A)$.
		\item[(iii)] For every $k\in\N_0$ there is $l\in\N_0$ such that
		$$\sup_{r\in\N_0,n\in\N}\frac{\left|\sum_{m=1}^{n}\left(\prod_{s=1}^{n}w_{r+s}\right)a_{r+m,k}\right|}{na_{j,l}}<\infty$$
	\end{itemize}
	Then, condition (i) implies (ii) and condition (ii) implies (iii).
\end{prop}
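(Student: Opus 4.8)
The plan is to transcribe the proof of Proposition~\ref{prop:Cb for B} essentially word for word, replacing the Cesàro-mean seminorm formulas \eqref{seminorm B[n], p finite} and \eqref{seminorm B[n], p infty} by their forward-shift counterparts \eqref{seminorm F[n], p finite} and \eqref{seminorm F[n], p infty}. The only structural difference is that $F_w$ pushes indices up rather than down, so the change of variables $r=j+m$ used in the reindexing plays the role that $r=j-m$ played for the backward shift; correspondingly $a_{r-m,k}$ becomes $a_{r+m,k}$ and $\prod_{s=1}^m w_{r-s}$ becomes $\prod_{s=1}^m w_{r+s}$, which is exactly the bookkeeping already reflected in the statements of $(Cb_p^F)$ and of condition (iii).

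For the implication (ii)$\Rightarrow$(iii) I would evaluate the Cesàro means on the canonical unit vectors $e_r$. Cesàro boundedness of $F_w$ means that for every $k\in\N_0$ there are $l\in\N_0$ and $c>0$ with $\Vert F_w^{[n]}x\Vert_k\le c\Vert x\Vert_l$ for all $x\in\lambda_p(A)$ and all $n\in\N$. Inserting $x=e_r$ into \eqref{seminorm F[n], p finite}, respectively \eqref{seminorm F[n], p infty}, and noting that for each coordinate $j$ only the term with $m=j-r$ contributes (so that $F_w^{[n]}e_r=\frac1n\sum_{m=1}^n\big(\prod_{s=1}^m w_{r+s}\big)e_{r+m}$), one reads off precisely the quotient displayed in (iii), bounded by $c$ uniformly in $r\in\N_0$ and $n\in\N$.

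For (i)$\Rightarrow$(ii) I would rerun the convexity estimate of Proposition~\ref{prop:Cb for B}. Fixing $k\in\N_0$ and choosing $l\in\N_0$ as in $(Cb_p^F)$, for $p\in[1,\infty)$ one applies Jensen's inequality (convexity of $t\mapsto t^p$ on $[0,\infty)$) inside \eqref{seminorm F[n], p finite}, substitutes $r=j-m$ in the resulting double sum, and recognises the factor $\big(n\,a_{r,l}^p\big)^{-1}\sum_{m=1}^n\big|\prod_{s=1}^m w_{r+s}\big|^p a_{r+m,k}^p$, whose supremum over $r\in\N_0,n\in\N$ is finite by $(Cb_p^F)$; this yields $\Vert F_w^{[n]}x\Vert_{k,p}^p\le C\,\Vert x\Vert_{l,p}^p$ for a finite constant $C$ independent of $n$. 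For $p\in\{0,\infty\}$ the same computation works with the outer sum over $j$ replaced by a supremum, using \eqref{seminorm F[n], p infty}. Hence $\{F_w^{[n]}:n\in\N\}$ is equicontinuous, i.e.\ $F_w$ is Cesàro bounded on $\lambda_p(A)$.

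I do not anticipate any genuine obstacle: the argument is identical in content to Proposition~\ref{prop:Cb for B}, and the only point requiring care is the index arithmetic (verifying $\prod_{t=0}^{m-1}w_{j-t}=\prod_{s=1}^m w_{j-m+s}$ and keeping the subscripts of $A$ synchronised across \eqref{seminorm F[n], p finite}, $(Cb_p^F)$ and (iii)), so the natural write-up simply records that the proof is \emph{mutatis mutandis} the same and omits the repetition.
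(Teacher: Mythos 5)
Your proposal is correct and coincides with the paper's own treatment: the paper simply declares the proof to be, mutatis mutandis, that of Proposition \ref{prop:Cb for B}, with \eqref{seminorm F[n], p finite} and \eqref{seminorm F[n], p infty} in place of \eqref{seminorm B[n], p finite} and \eqref{seminorm B[n], p infty}, which is exactly what you carry out (evaluation at $e_r$ for (ii)$\Rightarrow$(iii), convexity plus reindexing for (i)$\Rightarrow$(ii)). Only a minor slip in your first paragraph: for $F_w$ the relevant substitution is $r=j-m$ (as you correctly use later), not $r=j+m$.
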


Analogously as Corollary \ref{cor: Cb for non-negative weights} one obtains the next result.

\begin{coro}\label{cor: Cb for non-negative weights for F}
	Let $A$ be a K\"othe matrix and let $w\in\omega$ be such that $w_n\geq 0$, $n\in\N_0$, and such that the weighted forward shift $F_w$ is continuous on $\lambda_p(A)$. Then, the following are equivalent.
	\begin{itemize}
		\item[(i)] $F_w$ is Cesàro bounded on any/each of the spaces $\lambda_0(A),\lambda_1(A)$, or $\lambda_\infty(A)$.
		\item[(ii)] For every $k\in\N_0$ there is $l\in\N_0$ such that
		$$\sup_{r\in\N_0,n\in\N}\frac{\sum_{m=1}^{n}\left(\prod_{s=1}^{m}w_{r+s}\right)a_{r+m,k}}{na_{r,l}}<\infty.$$
	\end{itemize}
	If additionally, $w_n\leq 1$, $n\in\N_0$, and if the K\"othe matrix $A=(a_{n,k})_{n,k\in\N_0}$ satisfies that $(a_{n,k})_{n\in\N_0}$ is decreasing for every $k\in\N_0$, the equivalent conditions (i) and (ii) are also equivalent to condition (iii):
	\begin{itemize}
		\item[(iii)] $F_w$ is power bounded on any/each of the spaces $\lambda_0(A),\lambda_1(A)$.
	\end{itemize}
\end{coro}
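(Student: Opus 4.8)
The plan is to run the argument exactly as in the proof of Corollary~\ref{cor: Cb for non-negative weights}, the only simplification being that for $w\geq 0$ every absolute value occurring in Proposition~\ref{prop:Cb for F} is redundant.

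First I would settle the equivalence of (i) and (ii). Since $w_n\geq 0$ we have $\bigl|\prod_{s=1}^m w_{r+s}\bigr|=\prod_{s=1}^m w_{r+s}$ and all summands of $\sum_{m=1}^n\bigl(\prod_{s=1}^m w_{r+s}\bigr)a_{r+m,k}$ are non-negative, so that $(Cb_1^F)$, $(Cb_0^F)=(Cb_\infty^F)$, and condition (iii) of Proposition~\ref{prop:Cb for F} all coincide with condition (ii) of the present corollary. Feeding $p=1$, $p=0$ and $p=\infty$ into the implication chain (i)$\Rightarrow$(ii)$\Rightarrow$(iii) of Proposition~\ref{prop:Cb for F} then closes the loop for each of the three spaces: condition (ii) implies Cesàro boundedness of $F_w$ on $\lambda_1(A)$, on $\lambda_0(A)$ and on $\lambda_\infty(A)$, while Cesàro boundedness of $F_w$ on any one of these spaces implies condition (ii). Hence (i) and (ii) are equivalent.

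For the additional equivalence with (iii), note first that power bounded operators are always Cesàro bounded, so power boundedness of $F_w$ on $\lambda_0(A)$ or on $\lambda_1(A)$ gives (i), hence (ii). For the converse I would mimic the estimate from Corollary~\ref{cor: Cb for non-negative weights}: fix $k\in\N_0$ and pick $l$ as in (ii). For $m\leq n$ the hypothesis $0\leq w_s\leq 1$ yields $\prod_{s=1}^m w_{r+s}\geq\prod_{s=1}^n w_{r+s}$, and the assumed monotone decrease of $(a_{j,k})_{j\in\N_0}$ yields $a_{r+m,k}\geq a_{r+n,k}$; therefore each of the $n$ summands is at least $\bigl(\prod_{s=1}^n w_{r+s}\bigr)a_{r+n,k}$, whence
\[
\frac{\sum_{m=1}^n\bigl(\prod_{s=1}^m w_{r+s}\bigr)a_{r+m,k}}{n\,a_{r,l}}\;\geq\;\frac{\bigl(\prod_{s=1}^n w_{r+s}\bigr)a_{r+n,k}}{a_{r,l}}.
\]
Taking the supremum over $r\in\N_0$ and $n\in\N$ of the right-hand side gives exactly the quantity in condition (b)(ii) of Proposition~\ref{prop: power bounded}, which is therefore finite; hence $F_w$ is power bounded, and since this property is independent of $p$ it holds on both $\lambda_0(A)$ and $\lambda_1(A)$.

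The computation is routine; the one place to be careful is the direction of the two monotonicity hypotheses — decreasing rows of $A$ for the forward shift (as opposed to increasing rows for the backward shift in Corollary~\ref{cor: Cb for non-negative weights}) together with $w_s\leq 1$ — which is precisely what makes the $m=n$ summand the smallest one and so underlies the displayed inequality.
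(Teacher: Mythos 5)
Your argument is correct and is essentially the paper's own: the paper proves this corollary ``analogously'' to Corollary \ref{cor: Cb for non-negative weights}, i.e.\ exactly by identifying $(Cb_p^F)$ and condition (iii) of Proposition \ref{prop:Cb for F} with condition (ii) when $w\geq 0$, and then, under $w\leq 1$ and decreasing rows, bounding the Ces\`aro sum below by $n$ times its $m=n$ summand to reach the power-boundedness criterion of Proposition \ref{prop: power bounded}(b). Your handling of the reversed monotonicity (decreasing rows for $F_w$ versus increasing rows for $B_w$) is precisely the intended adaptation.
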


From \eqref{seminorm Fm, p finite} and \eqref{seminorm Fm, p infty} it follows that for $x\in\mbox{span}\{e_r; r\in\N_0\}$ we have $\lim_{n\rightarrow\infty}\frac{1}{n}F_w^n x=0$ in $\lambda_p(A)$ if and only if
$$\forall\,r,k\in\N_0:\,\lim_{n\rightarrow\infty}\frac{\left|\prod_{s=1}^n w_{r+s}\right|a_{r+n,k}}{n}=0.$$
Repeating the arguments from the proofs of Corollary \ref{cor: sufficiency for mean ergodicity of B, p finite} and Theorem \ref{theo: mean ergodicity for the Montel case}, we conclude the next results.

\begin{coro}\label{cor: sufficiency for mean ergodicity of F, p finite}
	Let $A$ be a K\"othe matrix and let $p\in (1,\infty)$. Let $w\in\omega$ be such that $w_n\geq 0$, $n\in\N_0$, and such that the weighted foward shift $F_w$ is continuous on $\lambda_p(A)$. Consider the following conditions.
	\begin{itemize}
		\item[(i)] For every $k\in\N_0$ we have
		$$\forall\,r\in\N_0:\,\lim_{n\rightarrow\infty}\frac{\left(\prod_{s=1}^n w_{r+s}\right)a_{r+n,k}}{n}=0$$
		and there is $l\in\N_0$ such that
		$$\sup_{r\in\N_0,n\in\N}\frac{\sum_{m=1}^{n}\left(\prod_{s=1}^{m}w_{r+s}\right)^p a_{r+m,k}^p}{na_{r,l}^p}<\infty.$$
		\item[(ii)] $F_w$ is mean ergodic on $\lambda_p(A)$.
		\item[(iii)]  For every $k\in\N_0$ we have
		$$\forall\,r\in\N_0:\,\lim_{n\rightarrow\infty}\frac{\left(\prod_{s=1}^n w_{r+s}\right)a_{r+n,k}}{n}=0$$
		and there is $l\in\N_0$ such that
		$$\sup_{r\in\N_0,n\in\N}\frac{\sum_{m=1}^{n}\left(\prod_{s=1}^{m}w_{r+s}\right) a_{r+m,k}}{na_{r,l}}<\infty.$$
	\end{itemize}
	Then, condition (i) implies (ii) and condition (ii) implies (iii).
\end{coro}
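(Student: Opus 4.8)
The plan is to mirror the proof of Corollary \ref{cor: sufficiency for mean ergodicity of B, p finite}, combining Theorem \ref{theo: mean ergodic} with Proposition \ref{prop:Cb for F}. The one structural difference to keep in mind is that, in contrast with the backward shift, $\frac{1}{n}F_w^n x$ is \emph{not} eventually zero on finitely supported $x$; this is exactly why the pointwise convergence to $0$ has to be imposed explicitly in (i) and reappears in (iii).

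For the implication (i)$\Rightarrow$(ii), I would first note that, since $w_n\geq 0$, the supremum hypothesis in (i) is literally the condition $(Cb_p^F)$ of Proposition \ref{prop:Cb for F} for the given $p\in(1,\infty)$, so that proposition yields that $F_w$ is Ces\`aro bounded on $\lambda_p(A)$, i.e.\ $\{F_w^{[n]}:n\in\N\}$ is equicontinuous. Using $\frac{1}{n}F_w^n=F_w^{[n]}-\frac{n-1}{n}F_w^{[n-1]}$ one gets that $\{\frac{1}{n}F_w^n:n\in\N\}$ is equicontinuous as well. Next, by \eqref{seminorm Fm, p finite} one has $\Vert\frac{1}{n}F_w^n e_r\Vert_{k,p}=\tfrac{1}{n}\left(\prod_{s=1}^n w_{r+s}\right)a_{r+n,k}$, which tends to $0$ by the first part of (i); hence $\frac{1}{n}F_w^n x\to 0$ for every $x$ in the dense subspace $\mbox{span}\{e_r:r\in\N_0\}$, and by equicontinuity this extends to all of $\lambda_p(A)$ through the standard $3\varepsilon$-argument. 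Finally, $\lambda_p(A)$ is reflexive for $p\in(1,\infty)$ (see e.g.\ \cite[Proposition 27.3]{MeVo1997}), so Theorem \ref{theo: mean ergodic} gives that $F_w$ is mean ergodic, i.e.\ (ii).

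For (ii)$\Rightarrow$(iii), I would use that $\lambda_p(A)$ is a Fr\'echet space, hence barrelled, so a mean ergodic $F_w$ is Ces\`aro bounded and, again via $\frac{1}{n}F_w^n=F_w^{[n]}-\frac{n-1}{n}F_w^{[n-1]}$, satisfies $\frac{1}{n}F_w^n x\to 0$ for every $x$. Evaluating the latter at $x=e_r$ and invoking \eqref{seminorm Fm, p finite} gives the limit condition in (iii); Ces\`aro boundedness together with Proposition \ref{prop:Cb for F} (and $w_n\geq 0$, which allows the absolute value to be dropped) gives the supremum condition in (iii).

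The only step that is not purely formal is the passage from the dense subspace to all of $\lambda_p(A)$ for the sequence $\left(\frac{1}{n}F_w^n x\right)_n$: because this sequence is not eventually zero on finitely supported vectors, one genuinely needs both halves of hypothesis (i) in tandem — the $p$-th-power bound supplies equicontinuity (through Ces\`aro boundedness), while the pointwise limit supplies convergence on the dense set — before the equicontinuity argument can be applied. All the underlying seminorm estimates are already carried out in Proposition \ref{prop:Cb for F}, so the remaining work is essentially bookkeeping.
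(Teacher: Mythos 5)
Your proof is correct and follows essentially the same route the paper intends: the paper proves this corollary by the remark preceding it (identifying when $\frac{1}{n}F_w^n x\to 0$ on finitely supported vectors) together with "repeating the arguments" of Corollary \ref{cor: sufficiency for mean ergodicity of B, p finite}, i.e.\ Ces\`aro boundedness via Proposition \ref{prop:Cb for F}, equicontinuity of $\{\frac{1}{n}F_w^n\}$, density of $\spn\{e_r\}$, reflexivity of $\lambda_p(A)$, and Theorem \ref{theo: mean ergodic}. Your explicit observation that the pointwise limit condition must be assumed because $\frac{1}{n}F_w^n x$ is not eventually zero on finitely supported vectors is precisely the point the paper makes just before the statement.
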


\begin{theo}\label{theo: mean ergodicity of F for the Montel case}
	Let $w\in\omega$ be such that $w_n\geq 0$, $n\in\N_0$. Moreover, let $A$ be a K\"othe matrix such that $\lambda^p(A)$ is a Montel space and such that the weighted backward shift $F_w$ is continuous on $\lambda_p(A)$ for some/all $p\in[1,\infty]\cup\{0\}$. Then, the following are equivalent.
	\begin{itemize}
		\item[(i)] $F_w$ is (uniformly) mean ergodic on any/each of the spaces $\lambda_0(A)$ or $\lambda_1(A)$.
		\item[(ii)] For every $k\in\N_0$ there is $l\in\N_0$ such that
		$$\sup_{r\in\N_0,n\in\N}\frac{\sum_{m=1}^{n}\left(\prod_{s=1}^{m}w_{r+s}\right)a_{r+m,k}}{na_{r,l}}<\infty$$
		and it holds
		$$\forall\,r\in\N_0:\,\lim_{n\rightarrow\infty}\frac{\left|\prod_{s=1}^n w_{r+s}\right|a_{r+n,k}}{n}=0.$$
	\end{itemize}
\end{theo}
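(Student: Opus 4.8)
The plan is to transcribe the proof of Theorem~\ref{theo: mean ergodicity for the Montel case}, replacing $B_w$ and Corollary~\ref{cor: Cb for non-negative weights} by $F_w$ and Corollary~\ref{cor: Cb for non-negative weights for F}, while now also bookkeeping the limit condition that governs the convergence of $\tfrac1n F_w^n x$ to $0$. Throughout one uses that, under the Montel hypothesis, the Dieudonn\'e--Gomes theorem gives $\lambda_0(A)=\lambda_\infty(A)$ and makes both $\lambda_0(A)$ and $\lambda_1(A)$ Montel (in particular barrelled), and that by the Remark following Proposition~\ref{prop: power bounded} the continuity of $F_w$ does not depend on the chosen $p\in[1,\infty]\cup\{0\}$, so that the hypotheses are unambiguous.

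For (i)~$\Rightarrow$~(ii): if $F_w$ is mean ergodic on $\lambda_0(A)$ or on $\lambda_1(A)$, then, both spaces being barrelled, $F_w$ is Ces\`aro bounded there, and Corollary~\ref{cor: Cb for non-negative weights for F} yields the supremum condition in (ii). Moreover, mean ergodicity on a barrelled space forces $\tfrac1n F_w^n x\to 0$ for every $x$; I would evaluate this at $x=e_r$ and invoke \eqref{seminorm Fm, p finite} and \eqref{seminorm Fm, p infty}, which give $\Vert F_w^n e_r\Vert_{k,p}=\bigl(\prod_{s=1}^n w_{r+s}\bigr)a_{r+n,k}$ (here $w_n\geq 0$), whence one reads off precisely the limit condition in (ii).

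For (ii)~$\Rightarrow$~(i): the supremum condition together with Corollary~\ref{cor: Cb for non-negative weights for F} shows that $F_w$ is Ces\`aro bounded on $\lambda_0(A)$ and $\lambda_1(A)$, so $\{F_w^{[n]}:n\in\N\}$ is equicontinuous, and hence so is $\{\tfrac1n F_w^n:n\in\N\}$ because $\tfrac1n F_w^n=F_w^{[n]}-\tfrac{n-1}{n}F_w^{[n-1]}$. By the observation preceding Corollary~\ref{cor: sufficiency for mean ergodicity of F, p finite} the limit condition in (ii) says exactly that $\tfrac1n F_w^n x\to 0$ on the dense subspace $\mathrm{span}\{e_r:r\in\N_0\}$; an equicontinuity/completeness argument then propagates this to all of $\lambda_0(A)$ and $\lambda_1(A)$. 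Since these are Montel, Theorem~\ref{theo:ergodicity and transposed} upgrades ``Ces\`aro bounded together with $\tfrac1n F_w^n\to 0$ pointwise'' to ``uniformly mean ergodic'', closing the loop.

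The step I expect to require the most care is this last propagation in (ii)~$\Rightarrow$~(i) --- passing from pointwise convergence to $0$ of the equicontinuous family $\{\tfrac1n F_w^n\}$ on the finitely supported sequences to pointwise convergence on the whole Fr\'echet space via a standard $3\varepsilon$ estimate --- together with checking that the Montel hypothesis indeed makes Theorem~\ref{theo:ergodicity and transposed} applicable to both $\lambda_0(A)$ and $\lambda_1(A)$; everything else is a direct copy of the backward-shift reasoning.
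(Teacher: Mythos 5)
Your proposal is correct and follows exactly the route the paper intends: the paper itself states that this theorem is obtained by "repeating the arguments" of Corollary \ref{cor: sufficiency for mean ergodicity of B, p finite} and Theorem \ref{theo: mean ergodicity for the Montel case}, with Corollary \ref{cor: Cb for non-negative weights for F} replacing Corollary \ref{cor: Cb for non-negative weights} and the extra limit condition accounting for the fact that $\frac{1}{n}F_w^n e_r$ no longer vanishes eventually, which is precisely the observation recorded just before Corollary \ref{cor: sufficiency for mean ergodicity of F, p finite}. Your evaluation $\Vert F_w^n e_r\Vert_{k}=\bigl(\prod_{s=1}^n w_{r+s}\bigr)a_{r+n,k}$ and the equicontinuity-plus-density propagation are exactly the steps the paper relies on, so no gap remains.
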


\section{Ergodic properties of weighted shifts on power series spaces and related results}\label{sec: power series spaces}

In this section we evaluate the results from the previous one for the special case of power series spaces and we apply these in order to study dynamical properties of some classical operators on certain function spaces. For an exponent sequence $\alpha=(\alpha_n)_{n\in\N_0}$ we have for the corresponding power series spaces $\Lambda_\infty(\alpha)=\lambda^1(A_\infty(\alpha))$ and $\Lambda_0(\alpha)=\lambda_1(A_0(\alpha))$ with the K\"othe matrices $A_\infty(\alpha)=\left(\exp(k\alpha_n)\right)_{k,n\in\N_0}$ and $A_0(\alpha)=\left(\exp\left(-\frac{\alpha_n}{k+1}\right)\right)_{k,n\in\N_0}$, respectively. In particular, property \eqref{eq: characteristic for Montel} is satisfied so that $\Lambda_\infty(\alpha)$ and $\Lambda_0(\alpha)$ are both Montel spaces.

\begin{prop}\label{prop: topologizable on power series spaces}
	Let $\alpha=\left(\alpha_n\right)_{n\in\N_0}$ be an exponent sequence and let $w\in\omega$. Then the following hold.
	\begin{itemize}
		\item[(i)] Assume that $B_w$ is continuous in $\Lambda_\infty(\alpha)$. Then, $B_w$ is topologizable on $\Lambda_\infty(\alpha)$ if and only if \begin{equation}\label{eq: topologizability of backward on infinite power series space}
			\sup_{m\in\N}\limsup_{n\rightarrow\infty}\frac{\ln\left|\prod_{j=0}^{m-1}w_{n+j}\right|}{\alpha_{n+m}}<\infty.
		\end{equation}
		\item[(ii)] Assume that $F_w$ is continuous in $\Lambda_\infty(\alpha)$. Then, $F_w$ is topologizable on $\Lambda_\infty(\alpha)$ if and only if \begin{equation}\label{eq: topologizability of forward on infinite power series space}
			\forall\,k\in\N_0:\, \sup_{m\in\N}\limsup_{n\rightarrow\infty}\frac{\ln\left|\prod_{j=1}^m w_{n+j}\right|+k\alpha_{n+m}}{\alpha_n}<\infty.
		\end{equation}
		\item[(iii)]  Assume that $B_w$ is continuous in $\Lambda_0(\alpha)$. If $B_w$ is topologizable on $\Lambda_0(\alpha)$ it holds
		\begin{equation}\label{eq: topologizability of backward on finite power series space}
			\sup_{m\in\N}\limsup_{n\rightarrow\infty}\frac{ \ln\left|\prod_{j=0}^{m-1}w_{n+j}\right|}{\alpha_{n+m}}\leq 0.
		\end{equation}
		If additionally \ $\sup_{n\in\N}(\alpha_{n+1}-\alpha_n)<\infty$ holds, condition \eqref{eq: topologizability of backward on finite power series space} gives that $B_w$ is topologizable on $\Lambda_0(\alpha)$.
		\item[(iv)]  Assume that $F_w$ is continuous in $\Lambda_0(\alpha)$. Then $F_w$ is topologizable on $\Lambda_0(\alpha)$ if and only if
		\begin{equation}\label{eq: topologizability of forward on finite power series space}
			\sup_{m\in\N}\limsup_{n\rightarrow\infty}\frac{ \ln\left|\prod_{j=1}^mw_{n+j}\right|}{\alpha_{n+m}}\leq 0.
		\end{equation}
	\end{itemize}
\end{prop}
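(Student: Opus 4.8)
The plan is to insert the explicit Köthe matrices $A_\infty(\alpha)=(\exp(k\alpha_n))_{k,n\in\N_0}$ and $A_0(\alpha)=(\exp(-\alpha_n/(k+1)))_{k,n\in\N_0}$ into the characterizations of topologizability provided by Proposition \ref{prop: topologizable}, pass to logarithms, and divide by the relevant exponent ($\alpha_{n+m}$ or $\alpha_n$). Two elementary facts are used throughout. First, each finite product $\prod_j w_{n+j}$ is a finite scalar, so for fixed $m$ the supremum over any bounded range of $n$ is automatically finite; only the behaviour as $n\to\infty$ matters, which is exactly what the $\limsup_{n\to\infty}$ in the asserted conditions records. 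Second, $\alpha$ is increasing with $\alpha_n\to\infty$, so $\alpha_n\le\alpha_{n+m}$ and any term of the form $(\ln C_m)/\alpha_{n+m}$ vanishes in the limit.

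For (i), Proposition \ref{prop: topologizable}(a) says $B_w$ is topologizable on $\Lambda_\infty(\alpha)$ iff for every $k$ there is $l$ with $\sup_n|\prod_{j=0}^{m-1}w_{n+j}|\exp(k\alpha_n-l\alpha_{n+m})<\infty$ for each $m$. Necessity of \eqref{eq: topologizability of backward on infinite power series space} follows by taking $k=0$, passing to logarithms, and dividing by $\alpha_{n+m}$, which yields $\limsup_n\ln|\prod_{j=0}^{m-1}w_{n+j}|/\alpha_{n+m}\le l$ for every $m$. For sufficiency, letting $S$ be the (finite) supremum in \eqref{eq: topologizability of backward on infinite power series space}, given $k$ one chooses $l\in\N_0$ with $l>k$ and $l\ge k+S+1$, and uses $\alpha_n\le\alpha_{n+m}$ together with $\ln|\prod_{j=0}^{m-1}w_{n+j}|\le(S+1)\alpha_{n+m}$ for large $n$ to get $\ln|\prod_{j=0}^{m-1}w_{n+j}|+k\alpha_n-l\alpha_{n+m}\le(S+1+k-l)\alpha_{n+m}\le0$ for large $n$.

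Part (ii) is the same computation with the two indices interchanged: Proposition \ref{prop: topologizable}(b) gives that $F_w$ is topologizable on $\Lambda_\infty(\alpha)$ iff for every $k$ there is $l$ with $\sup_n|\prod_{j=1}^{m}w_{n+j}|\exp(k\alpha_{n+m}-l\alpha_n)<\infty$ for each $m$; taking logarithms and dividing by $\alpha_n$ gives precisely the equivalence with \eqref{eq: topologizability of forward on infinite power series space}, the parameter $k$ now surviving in the condition because $\alpha_{n+m}/\alpha_n$ need not be bounded, so $k\alpha_{n+m}$ cannot be absorbed into $l\alpha_n$. For (iii) and (iv) one repeats this with $a_{n,k}=\exp(-\alpha_n/(k+1))$. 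In (iii) the condition becomes $\sup_n|\prod_{j=0}^{m-1}w_{n+j}|\exp(\alpha_{n+m}/(l+1)-\alpha_n/(k+1))<\infty$, and dividing by $\alpha_{n+m}$ (using $\alpha_n\le\alpha_{n+m}$) produces $\limsup_n\ln|\prod_{j=0}^{m-1}w_{n+j}|/\alpha_{n+m}\le1/(k+1)$ for every $k$ (each $k$ with its own $l$), whence \eqref{eq: topologizability of backward on finite power series space}. In (iv) the condition becomes $\sup_n|\prod_{j=1}^{m}w_{n+j}|\exp(\alpha_n/(l+1)-\alpha_{n+m}/(k+1))<\infty$, and since the positive term $\alpha_n/(l+1)$ is dominated by $\alpha_{n+m}/(l+1)$, both directions go through with no restriction on $\alpha$: necessity as above, and sufficiency by taking $l>k$, $\delta:=1/(k+1)-1/(l+1)>0$, and using $\ln|\prod_{j=1}^{m}w_{n+j}|\le(\delta/2)\alpha_{n+m}$ for large $n$ to bound the exponent by $-(\delta/2)\alpha_{n+m}\le0$.

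The one point that needs genuine care is the sufficiency half of (iii), which is where the extra hypothesis $D:=\sup_n(\alpha_{n+1}-\alpha_n)<\infty$ is invoked. Here the positive term in the exponent is $\alpha_{n+m}/(l+1)$, which involves $\alpha_{n+m}$ rather than $\alpha_n$, so to bound it against $-\alpha_n/(k+1)$ one needs $\alpha_{n+m}-\alpha_n\le mD$; choosing $l=k+1$ and $\delta:=1/(k+1)-1/(l+1)>0$ then gives $\alpha_{n+m}/(l+1)-\alpha_n/(k+1)\le-\delta\alpha_{n+m}+mD/(k+1)$, and combining with $\ln|\prod_{j=0}^{m-1}w_{n+j}|\le(\delta/2)\alpha_{n+m}$ for large $n$ (valid since \eqref{eq: topologizability of backward on finite power series space} holds) bounds the exponent by $-(\delta/2)\alpha_{n+m}+mD/(k+1)$, i.e.\ by a constant depending only on $m$. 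Without such a growth restriction this estimate genuinely breaks down, which is exactly why \eqref{eq: topologizability of backward on finite power series space} is only claimed to be sufficient in that case. Everything else is routine bookkeeping with logarithms, the monotonicity of $\alpha$, and $\alpha_n\to\infty$.
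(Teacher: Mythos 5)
Your proof is correct and takes essentially the same route as the paper: insert the explicit Köthe matrices into Proposition \ref{prop: topologizable}, pass to logarithms, divide by $\alpha_{n+m}$ (resp.\ $\alpha_n$), absorb the finitely many remaining indices into the $m$-dependent constant, and in (iii) use $\alpha_{n+m}-\alpha_n\le mD$ with $l=k+1$, exactly as the paper's telescoping estimate. A small bonus: your necessity argument in (iii) and (iv), which runs through all $k\in\N_0$ to force the limsup below $1/(k+1)$ for every $k$, is actually more careful than the paper's remark that evaluating the condition at $k=0$ already yields \eqref{eq: topologizability of backward on finite power series space}, since $k=0$ alone only gives the bound $1-\tfrac{1}{l+1}$.
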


\begin{proof}
	In order to prove (i), we observe that by Proposition \ref{prop: topologizable} $B_w$ is topologizable on $\Lambda_\infty(\alpha)$ if and only if for every $k\in\N_0$ there is $l\in\N_0$ such that
	\begin{equation}\label{eq: topologizability of backward on infinite power series space 1}
		 \forall\,m\in\N\,\exists\, C>0\,\forall\,n\in\N_0:\,\ln\left|\prod_{j=0}^{m-1}w_{n+j}\right|\leq C+l\alpha_{n+m}-k\alpha_n.
	\end{equation}
	Thus, if $B_w$ is topologizable, evaluating \eqref{eq: topologizability of backward on infinite power series space 1} for $k=0$ implies \eqref{eq: topologizability of backward on infinite power series space}. On the other hand, assuming \eqref{eq: topologizability of backward on infinite power series space} let $k\in\N_0$ be fixed. We set $R=\sup_{m\in\N}\limsup_{n\rightarrow\infty}\frac{\ln\left|\prod_{j=0}^{m-1}w_{n+j}\right|}{\alpha_{n+m}}$. For each $m\in\N_0$ there is $N_m\in\N$ such that
	$$\forall\,n\geq N_m:\,\ln\left|\prod_{j=0}^{m-1}w_{n+j}\right|+k\alpha_n\leq (R+1)k\alpha_{n+m}+k\alpha_n\leq (R+2)k\alpha_{n+m}.$$
	Thus \eqref{eq: topologizability of backward on infinite power series space 1} holds true for $l\in\N_0$ with $l\geq (R+2)k$ so that $B_w$ is topologizable on $\Lambda_\infty(\alpha)$.
	
	For the proof of (ii) we observe that by Proposition \ref{prop: topologizable} topologizability of $F_w$ on $\Lambda_\infty(\alpha)$ precisely when for every $k\in\N_0$ there is $l\in\N_0$ with
	$$\forall\,m\in\N\,\exists\,C>0\,\forall\,n\in\N_0:\,\ln\left|\prod_{j=1}^mw_{n+j}\right|+k\alpha_{n+m}\leq C +l\alpha_n.$$
	From this one easily derives that \eqref{eq: topologizability of forward on infinite power series space} is equivalent to the topologizability of $F_w$ on $\Lambda_\infty(\alpha)$ so that (ii) follows.
	
	For the proof of (iii) we apply Proposition \ref{prop: topologizable} to see that $B_w$ is topologizable on $\Lambda_0(\alpha)$ if and only if for every $k\in\N_0$ there is $l\in\N_0$ with
	\begin{equation}\label{eq: topologizability of backward on finite power series space 1}
		\forall\,m\in\N\,\exists\,C>0\,\forall\,n\in\N_0:\, \ln\left|\prod_{j=0}^{m-1} w_{n+j}\right|+\frac{\alpha_{n+m}}{l+1}\leq C +\frac{\alpha_n}{k+1}.
	\end{equation}
	Thus, if $B_w$ is topologizable on $\Lambda_0(\alpha)$, evaluating the above condition for $k=0$ implies \eqref{eq: topologizability of backward on finite power series space}. On the other hand, under the assumption that \eqref{eq: topologizability of backward on finite power series space} holds, for fixed $k\in\N_0$ and arbitrary $m\in\N_0$ there is $N_m\in\N$ such that
	\begin{eqnarray*}
		\forall\,n\geq N_m:\,\ln\left|\prod_{j=0}^{m-1} w_{n+j}\right|&\leq&\left(\frac{1}{k+1}-\frac{1}{k+2}\right)\alpha_{n+m}=\frac{\alpha_n}{k+1}+\frac{\sum_{j=0}^{m-1}(\alpha_{n+j+1}-\alpha_{n+j})}{k+1}-\frac{\alpha_{n+m}}{k+2}\\
		&\leq&\frac{m}{k+1}\sup_{r\in\N}(\alpha_{r+1}-\alpha_r)+\frac{\alpha_n}{k+1}-\frac{\alpha_{n+m}}{k+2}.
	\end{eqnarray*} 
	Thus, under the additional hypothesis that $\sup_{r\in\N}(\alpha_{r+1}-\alpha_r)<\infty$ we see that \eqref{eq: topologizability of backward on finite power series space 1} is satisfied for $l=k+1$ so that $F_w$ is topologizable. This proves (iii).
	
	Finally, for the proof of (iv), by Proposition \ref{prop: topologizable}, $F_w$ is topologizable on $\Lambda_0(\alpha)$ if and only if for arbitrary $k\in\N_0$ there is $l\in\N_0$ with
	\begin{equation}\label{eq: topologizability of forward on finite power series space 1}
		\forall\,m\in\N\,\exists\,C>0\,\forall\,n\in\N_0:\,\ln\left|\prod_{j=1}^m w_{n+j}\right|+\frac{\alpha_n}{l+1}\leq C +\frac{\alpha_{n+m}}{k+1}.
	\end{equation}
	Hence, if $F_w$ is topologizable on $\Lambda_0(\alpha)$ by the above condition \eqref{eq: topologizability of forward on finite power series space} follows. On the other hand, if \eqref{eq: topologizability of forward on finite power series space} holds, for fixed $k\in\N_0$ we derive for arbitrary $m\in\N$ the existence of $N_m\in\N$ for which
	$$\forall\,n\geq N_m:\,\ln\left|\prod_{j=1}^m w_{n+j}\right|\leq\frac{\alpha_{n+m}}{2(k+1)}\leq\frac{\alpha_{n+m}}{k+1}-\frac{\alpha_n}{2(k+1)}.$$
	Thus, with $l=2k+1$ it follows
	$$\forall\,m\in\N\,\forall\,n\in\N_0:\,\ln\left|\prod_{j=1}^m w_{n+j}\right|+\frac{\alpha_n}{l+1}\leq\max\left\{ \ln\left|\prod_{j=1}^m w_{r+j}\right|+\frac{\alpha_r}{l+1} ;1\leq r\leq N_m\right\} +\frac{\alpha_{n+m}}{k+1}$$
	so that \eqref{eq: topologizability of forward on finite power series space 1} holds which proves (iv).
\end{proof}

In order to give a simplified condition of power boundedness for $B_w$ and $F_w$ on power series spaces we introduce the following notion. Given a matrix $\left(A_{n,m}\right)_{n\in\N_0,m\in\N}\in\R^{\N_0\times\N}$ we define the superior limit along $n+m$ as
\[ \limsup_{n+m\to \infty} A_{n,m}:= \inf_{k\in\N}\left\{ \sup_{n+m\geq k}\left\{ A_{n,m} \right\}\right\}. \]

\begin{prop}\label{prop: power bounded on power series spaces}
	Let $\alpha=\left(\alpha_n\right)_{n\in\N_0}$ be an exponent sequence with $\alpha_0>0$ and let $w\in\omega$. Then the following hold.
	\begin{itemize}
		\item[(i)] Assume that $B_w$ is continuous in $\Lambda_\infty(\alpha)$. $B_w$ is power bounded on $\Lambda_\infty(\alpha)$ if and only if
		\begin{equation}\label{eq: power boundedness of backward on infinite power series space}
			\sup_{n\in\N_0,m\in\N}\frac{\ln\left|\prod_{j=0}^{m-1}w_{n+j}\right|}{\alpha_{n+m}}<\infty.
		\end{equation}
		\item[(ii)] Assume that $F_w$ is continuous on $\Lambda_\infty(\alpha)$. Then $F_w$ is power bounded on $\Lambda_\infty(\alpha)$ if and only if
		\begin{equation}\label{eq: power boundedness of forward on infinite power series space}
			\forall\,k\in\N_0:\, \sup_{n\in\N_0,m\in\N}\frac{\ln\left|\prod_{j=1}^{m}w_{n+j}\right|+k\alpha_{n+m}}{\alpha_n}<\infty.
		\end{equation}
		\item[(iii)] Assume that $B_w$ is continuous in $\Lambda_0(\alpha)$. Then $B_w$ is power bounded on $\Lambda_0(\alpha)$ if and only if
		\begin{equation}\label{eq: power boundedness of backward on finite power series space}
		\forall\,k\in\N_0:\,
		\limsup_{n+m\rightarrow\infty}\frac{\ln\left|\prod_{j=0}^{m-1}w_{n+j}\right|-\frac{\alpha_n}{k+1}}{\alpha_{n+m}}<0
		\end{equation}
		\item[(iv)] Assume that $F_w$ is continuous on $\Lambda_0(\alpha)$. $F_w$ is power bounded on $\Lambda_0(\alpha)$ if and only if
		\begin{equation}\label{eq: power boundedness of forward on finite power series space}
			\limsup_{n+m\rightarrow\infty}\frac{ \ln\left|\prod_{j=1}^{m}w_{n+j}\right|}{\alpha_{n+m}}\leq 0.
		\end{equation}
	\end{itemize}
\end{prop}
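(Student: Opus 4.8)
The plan is to reduce each of the four equivalences to the corresponding criterion of Proposition \ref{prop: power bounded} by substituting the explicit Köthe matrices of the two power series spaces and then taking logarithms. Recall $\Lambda_\infty(\alpha)=\lambda_1(A_\infty(\alpha))$ with $a_{n,k}=\exp(k\alpha_n)$ and $\Lambda_0(\alpha)=\lambda_1(A_0(\alpha))$ with $a_{n,k}=\exp(-\alpha_n/(k+1))$. The hypothesis $\alpha_0>0$ forces $\alpha_n\geq\alpha_0>0$ for all $n$, so every quotient appearing below is well defined, while monotonicity of $\alpha$ gives $\alpha_n\leq\alpha_{n+m}$ for all $n,m$. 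In each case the criterion of Proposition \ref{prop: power bounded} becomes, after taking logarithms, the assertion that for every $k\in\N_0$ there is $l\in\N_0$ such that the relevant expression built from $\ln\bigl|\prod_j w_{n+j}\bigr|$, $\alpha_n$ and $\alpha_{n+m}$ is bounded above uniformly in $n\in\N_0$ and $m\in\N$.

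For (i), Proposition \ref{prop: power bounded}(a) says that $B_w$ is power bounded on $\Lambda_\infty(\alpha)$ iff for every $k$ there is $l$ with $\ln\bigl|\prod_{j=0}^{m-1}w_{n+j}\bigr|+k\alpha_n-l\alpha_{n+m}\leq C$ for all $n,m$. Taking $k=0$ and dividing by $\alpha_{n+m}\geq\alpha_0$ yields \eqref{eq: power boundedness of backward on infinite power series space}; conversely, if $R$ denotes the finite supremum in \eqref{eq: power boundedness of backward on infinite power series space}, then using $k\alpha_n\leq k\alpha_{n+m}$ one checks that any integer $l\geq R+k$ works. For (ii), Proposition \ref{prop: power bounded}(b) gives that $F_w$ is power bounded on $\Lambda_\infty(\alpha)$ iff for every $k$ there is $l$ with $\ln\bigl|\prod_{j=1}^{m}w_{n+j}\bigr|+k\alpha_{n+m}-l\alpha_n\leq C$; dividing by $\alpha_n\geq\alpha_0$ shows this is precisely \eqref{eq: power boundedness of forward on infinite power series space}, the converse being immediate since $\alpha_n>0$. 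Thus (i) and (ii) are essentially bookkeeping with the exponentials.

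For (iii) and (iv) one uses $\ln a_{n,k}=-\alpha_n/(k+1)$. For (iii), power boundedness of $B_w$ on $\Lambda_0(\alpha)$ becomes: for every $k$ there is $l$ with $\ln\bigl|\prod_{j=0}^{m-1}w_{n+j}\bigr|-\frac{\alpha_n}{k+1}+\frac{\alpha_{n+m}}{l+1}\leq C$. If \eqref{eq: power boundedness of backward on finite power series space} holds, with the limsup along $n+m$ at index $k$ equal to $-\delta_k<0$, then outside a finite set of pairs $(n,m)$ one has $\ln\bigl|\prod_{j=0}^{m-1}w_{n+j}\bigr|-\frac{\alpha_n}{k+1}\leq-\frac{\delta_k}{2}\alpha_{n+m}$, so any $l$ with $\frac{1}{l+1}\leq\frac{\delta_k}{2}$ makes the expression $\leq 0$ there, and the finitely many remaining pairs contribute only a finite constant (they are harmless even when some $w_j$ vanishes, giving logarithm $-\infty$). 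Conversely, given power boundedness, invoke its criterion at the index $k+1$: since $\frac{\alpha_n}{k+2}-\frac{\alpha_n}{k+1}\leq 0$, this yields $\ln\bigl|\prod_{j=0}^{m-1}w_{n+j}\bigr|-\frac{\alpha_n}{k+1}\leq-\frac{\alpha_{n+m}}{l+1}+C$, whence $\frac{\ln|\prod_{j=0}^{m-1}w_{n+j}|-\alpha_n/(k+1)}{\alpha_{n+m}}\leq-\frac{1}{l+1}+\frac{C}{\alpha_{n+m}}$, and letting $n+m\to\infty$ gives \eqref{eq: power boundedness of backward on finite power series space}. For (iv), power boundedness of $F_w$ on $\Lambda_0(\alpha)$ becomes: for every $k$ there is $l$ with $\ln\bigl|\prod_{j=1}^{m}w_{n+j}\bigr|-\frac{\alpha_{n+m}}{k+1}+\frac{\alpha_n}{l+1}\leq C$. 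Dropping $\frac{\alpha_n}{l+1}\geq 0$ and dividing by $\alpha_{n+m}$ gives $\limsup_{n+m\to\infty}\frac{\ln|\prod_{j=1}^m w_{n+j}|}{\alpha_{n+m}}\leq\frac{1}{k+1}$ for every $k$, hence \eqref{eq: power boundedness of forward on finite power series space}; conversely, taking $l=k+1$ and using $\alpha_n\leq\alpha_{n+m}$ one gets $-\frac{\alpha_{n+m}}{k+1}+\frac{\alpha_n}{k+2}\leq-\frac{1}{(k+1)(k+2)}\alpha_{n+m}$, and \eqref{eq: power boundedness of forward on finite power series space} then forces $\ln\bigl|\prod_{j=1}^m w_{n+j}\bigr|-\frac{\alpha_{n+m}}{k+1}+\frac{\alpha_n}{k+2}$ to be negative outside a finite set of pairs and bounded on it.

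The main obstacle is the handling of the superior limit along $n+m$ in parts (iii) and (iv): one must carefully split the pairs $(n,m)$ with $n+m<N$ (on which the relevant expression is automatically bounded above) from the tail, and — in the \textquotedblleft only if\textquotedblright\ direction of (iii) — recognize that Proposition \ref{prop: power bounded} should be applied at the index $k+1$ rather than $k$ in order to absorb the term $-\alpha_n/(k+1)$. Everything else is routine manipulation of exponentials together with the inequality $\alpha_0\leq\alpha_n\leq\alpha_{n+m}$.
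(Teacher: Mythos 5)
Your proposal is correct and follows essentially the same route as the paper's proof: translate power boundedness via Proposition \ref{prop: power bounded} into the logarithmic inequalities for the explicit K\"othe matrices $\left(\exp(k\alpha_n)\right)$ and $\left(\exp(-\alpha_n/(k+1))\right)$, use $0<\alpha_0\leq\alpha_n\leq\alpha_{n+m}$, and in parts (iii) and (iv) split off the finitely many pairs with $n+m<N_k$. The only (harmless) deviation is in the \emph{only if} direction of (iii): invoking the criterion at index $k+1$ is not actually needed, since applying it at $k$ and dividing by $\alpha_{n+m}$ already yields $\limsup\leq-\tfrac{1}{2(l+1)}<0$, which is exactly what the paper does.
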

\begin{proof}
	Let $B_w$ be continuous on $\Lambda_\infty(\alpha)$. By  Proposition \ref{prop: power bounded} $B_w$ is power bounded on $\Lambda_\infty(\alpha)$ if and only if for each $k\in\N_0$ there are $l\in\N_0$ and $C>0$ with
	\begin{equation}\label{eq: power boundedness of backward on infinite power series space 1}
		\forall n\in\N_0, m\in\N:  \left|\prod_{j=0}^{m-1} w_{n+j}\right| \leq C\exp(l\alpha_{n+m}-k\alpha_{n}).
	\end{equation}
	Therefore, whenever $B_w$ is power bounded, evaluating the previous condition for $k=0$ implies \eqref{eq: power boundedness of backward on infinite power series space}. Conversely, if \eqref{eq: power boundedness of backward on infinite power series space} is satisfied, for a fixed $k\in\N_0$ we can find $c>0$ such that
	\[ \forall n\in\N_0, m\in\N:  \ln\left|\prod_{j=0}^{m-1} w_{n+j}\right|\leq ck\alpha_{n+m}. \]
	Then, for every $n\in\N_0, m\in\N$ we conclude
	\[ \left|\prod_{j=0}^{m-1} w_{n+j}\right|\exp(k\alpha_n)\leq \exp(ck\alpha_{n+m}+k\alpha_{n})\leq \exp(k(c+1)\alpha_{n+m}) \]
	so that \eqref{eq: power boundedness of backward on infinite power series space 1} holds true for every $l\in\N$ with $l\geq k(c+1)$ and $C=1$ so that $B_w$ is power bounded. This proves (i).
	
	In order to prove (ii) we apply Proposition \ref{prop: power bounded} to deduce that $F_w$ is power bounded on $\Lambda_0(\alpha)$ if and only if for each $k\in\N_0$ there are $l\in\N_0$ and $C>0$ such that
	$$\forall\,n\in\N_0, m\in\N:\, \ln\left|\prod_{j=1}^m w_{n+j}\right|+k\alpha_{n+m}\leq C+ l\alpha_n$$
	which is easily seen to be equivalent to \eqref{eq: power boundedness of forward on infinite power series space}.
	
	By Proposition \ref{prop: power bounded} $B_w$ is power bounded on $\Lambda_0(\alpha)$ if and only if the condition given in (a) (ii) holds which is clearly equivalent to for each $k\in\N_0$ there are $l\in\N_0$ and $C>0$ such that
	\begin{equation}\label{eq: power boundedness of backward on finite power series space 1}
		\forall\,n\in\N_0, m\in\N:\,\ln\left|\prod_{j=0}^{m-1} w_{n+j}\right| -\frac{\alpha_n}{k+1}\leq C- \frac{\alpha_{n+m}}{l+1} .
	\end{equation}
	Hence, if $B_w$ is power bounded on $\Lambda_0(\alpha)$ it follows from the above condition and the fact that $(\alpha_n)_{n\in\N_0}$ is increasing and tends to infinity, that for each $k\in\N_0$ there is $N_k>0$ such that $C/\alpha_{n+m}\leq \frac{1}{2(l+1)}$ for every pair $(n,m)\in \N_0\times\N$ with $n+m\geq N_k$. This gives  
	\[ \frac{\ln\left|\prod_{j=0}^{m-1}w_{n+j}\right| -\frac{\alpha_n}{k+1}}{\alpha_{n+m}}\leq -\frac{1}{2(l+1)} \]
	whenever $n+m\geq N_k$ is satisfied. Therefore, condition \eqref{eq: power boundedness of backward on finite power series space} holds. 
	
	Now assume that \eqref{eq: power boundedness of backward on finite power series space} holds. Then for a fixed $k\in \N_0$ there are $N_k>0$ and $A_k>1$ such that
	\[ \frac{\ln\left|\prod_{j=0}^{m-1}w_{n+j}\right| -\frac{\alpha_n}{k+1}}{\alpha_{n+m}}\leq -\frac{1}{A_k(k+1)} \]
	is satisfied for every pair $(n,m)\in \N_0\times\N$ with $n+m\geq N_k$. Now, since $\{(n,m): n+m<N_k\}$ is finite this gives that condition \eqref{eq: power boundedness of backward on finite power series space 1} holds for every $l\geq A_k(k+1)$. This proves (iii).
	
	Finally, in order to prove (iv), by Proposition \ref{prop: power bounded} $F_w$ is power bounded on $\Lambda_0(\alpha)$ if and only if for every $k\in\N_0$ there are $l\in\N_0$ and $C>0$ such that
	\begin{equation}\label{eq: power boundedness of forward on finite power series space 1} 
		\forall\,n\in\N_0, m\in\N:\,\ln\left|\prod_{j=1}^n w_{n+j}\right|+\frac{\alpha_n}{l+1}\leq C +\frac{\alpha_{n+m}}{k+1}.
	\end{equation}
	 Now assume that $F_w$ is power bounded. For each $k\in\N_0$ there are thus $l\in\N_0$ and $C>0$ with 
	\[ \forall n\in\N_0, m\in\N:\,  \frac{\ln\left|\prod_{j=1}^{m} w_{n+j}\right|}{\alpha_{n+m}} \leq \frac{C}{\alpha_{n+m}}+\frac{1}{k+1}. \]
	Now since $(\alpha_n)_{n\in\N_0}$ tends to infinity, there is $N_k>0$ such that $C/\alpha_{n+m}\leq \frac{1}{k+1}$ for every pair $(n,m)\in \N_0\times\N$ with $n+m\geq N_k$. Thus the above inequality implies condition \eqref{eq: power boundedness of forward on finite power series space}.
	
	On the other hand, if \eqref{eq: power boundedness of forward on finite power series space} is satisfied, for a fixed $k\in N_0$ there is $N_k$ such that 
	\[ \forall n\in\N_0, m\in\N, n+m\geq N_k: \ln\left|\prod_{j=1}^{m} w_{n+j}\right|\leq \frac{\alpha_{n+m}}{2(k+1)}. \]
	Thus we obtain
	\[ \forall n\in\N_0,m\in\N, n+m\geq N_k: \ln\left|\prod_{j=1}^{m} w_{n+j}\right|+\frac{\alpha_n}{2(k+1)}\leq\frac{\alpha_n+\alpha_{n+m}}{2(k+1)}\leq\frac{\alpha_{n+m}}{k+1}\]
	since $\alpha$ is monotonically increasing.	Since the set $\{(n,m): n+m<N_k\}$ is finite, this gives that condition \eqref{eq: power boundedness of forward on finite power series space 1} holds with $l=2k+1$ so that $F_w$ is power bounded on $\Lambda_0(\alpha)$. This proves (iv).
\end{proof}

As an immediate consequence of Theorems \ref{theo: mean ergodicity for the Montel case} and \ref{theo: mean ergodicity of F for the Montel case} we obtain the next corollary.

\begin{coro}\label{cor: mean ergodicity on power series spaces}
	Let $\alpha$ be an exponent sequence and $w\in\omega$ with $w_n\geq 0$ for each $n\in\N_0$.
	\begin{itemize}
		\item[(i)] Assume that $B_w$ is continuous on $\Lambda_\infty(\alpha)$. $B_w$ is (uniformly) mean ergodic on $\Lambda_\infty(\alpha)$ if and only if for every $k\in\N_0$ there is $l\in\N_0$ such that
		$$\sup_{r,n\in\N, r\geq n}\frac{1}{n}\sum_{m=1}^n\left(\prod_{s=1}^m w_{r-s}\right)\exp\left(k\alpha_{r-m}-l\alpha_r\right)<\infty.$$
		
		\item[(ii)] Assume that $F_w$ is continuous on $\Lambda_\infty(\alpha)$. $F_w$ is (uniformly) mean ergodic on $\Lambda_\infty(\alpha)$ if and only if for every $k\in\N_0$
		$$\forall\,r\in\N_0:\,\lim_{n\rightarrow\infty}\frac{\left(\prod_{s=1}^n w_{r+s}\right)\exp(k\alpha_{r+n})}{n}=0$$
		and there is $l\in\N_0$ such that
		$$\sup_{r\in\N_0, n\in\N}\frac{1}{n}\sum_{m=1}^n\left(\prod_{s=1}^m w_{r+s}\right)\exp\left(k\alpha_{r+m}-l\alpha_r\right)<\infty.$$	
		
		\item[(iii)] Assume that $B_w$ is continuous on $\Lambda_0(\alpha)$. $B_w$ is (uniformly) mean ergodic on $\Lambda_0(\alpha)$ if and only if for every $k\in\N_0$ there is $l\in\N_0$ such that
		$$\sup_{r, n\in\N, r\geq n}\frac{1}{n}\sum_{m=1}^n\left(\prod_{s=1}^m w_{r-s}\right)\exp\left(\frac{\alpha_r}{l+1}-\frac{\alpha_{r-m}}{k+1}\right)<\infty.$$
		
		\item[(iv)] Assume that $F_w$ is continuous on $\Lambda_0(\alpha)$.$F_w$ is (uniformly) mean ergodic on $\Lambda_0(\alpha)$ if and only if for every $k\in\N_0$
		$$\forall\,r\in\N_0:\,\lim_{n\rightarrow\infty}\frac{\left(\prod_{s=1}^n w_{r+s}\right)\exp\left(-\frac{\alpha_{r+n}}{k+1}\right)}{n}=0$$
		and there is $l\in\N_0$ such that
		$$\sup_{r\in\N_0, n\in\N}\frac{1}{n}\sum_{m=1}^n\left(\prod_{s=1}^m w_{r+s}\right)\exp\left(\frac{\alpha_r}{l+1}-\frac{\alpha_{r+m}}{k+1}\right)<\infty.$$	
		\end{itemize}	
\end{coro}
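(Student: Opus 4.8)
Since the statement is announced as an immediate corollary, the plan is to specialize Theorems~\ref{theo: mean ergodicity for the Montel case} and~\ref{theo: mean ergodicity of F for the Montel case} to the two K\"othe matrices $A_\infty(\alpha)=\left(\exp(k\alpha_n)\right)_{k,n\in\N_0}$ and $A_0(\alpha)=\left(\exp\left(-\frac{\alpha_n}{k+1}\right)\right)_{k,n\in\N_0}$ underlying $\Lambda_\infty(\alpha)=\lambda_1(A_\infty(\alpha))$ and $\Lambda_0(\alpha)=\lambda_1(A_0(\alpha))$. As recalled at the start of this section, both spaces satisfy~\eqref{eq: characteristic for Montel} and are therefore Montel, and by the Remark following Proposition~\ref{prop: power bounded} the continuity of $B_w$, respectively $F_w$, on a K\"othe echelon space does not depend on the order $p$; so, once $B_w$ (resp.\ $F_w$) is assumed continuous on the relevant power series space, both theorems apply with $p=1$.

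I would first settle (ii) and (iv), which are pure substitutions. For (ii), inserting $a_{n,k}=\exp(k\alpha_n)$ into the two conditions of part~(ii) of Theorem~\ref{theo: mean ergodicity of F for the Montel case} turns the quotient $\frac{\sum_{m=1}^n\left(\prod_{s=1}^m w_{r+s}\right)a_{r+m,k}}{n\,a_{r,l}}$ into $\frac{1}{n}\sum_{m=1}^n\left(\prod_{s=1}^m w_{r+s}\right)\exp\left(k\alpha_{r+m}-l\alpha_r\right)$ and $\frac{\left|\prod_{s=1}^n w_{r+s}\right|a_{r+n,k}}{n}$ into $\frac{\left(\prod_{s=1}^n w_{r+s}\right)\exp(k\alpha_{r+n})}{n}$, using $w_n\geq 0$; this is precisely the condition stated in (ii). For (iv), the same theorem applied with $a_{n,k}=\exp\left(-\frac{\alpha_n}{k+1}\right)$ yields the two displayed conditions verbatim.

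Parts (i) and (iii) follow in the same way from part~(ii) of Theorem~\ref{theo: mean ergodicity for the Montel case}, substituting $a_{n,k}=\exp(k\alpha_n)$, respectively $a_{n,k}=\exp\left(-\frac{\alpha_n}{k+1}\right)$, into its condition. The only point needing a remark is that the supremum in Theorem~\ref{theo: mean ergodicity for the Montel case}(ii) ranges over $r\in\N_0$, $n\in\N$, whereas in (i) and (iii) it ranges over $r,n\in\N$ with $r\geq n$; these coincide because, by the standing convention $a_{s,k}=0$ for $s\in\Z\backslash\N_0$, every summand with $m>r$ in the Ces\`aro sum vanishes. Consequently, for fixed $r\geq 1$ the quotient $\frac{\sum_{m=1}^n\left(\prod_{s=1}^m w_{r-s}\right)a_{r-m,k}}{n\,a_{r,l}}$ is non-increasing in $n$ once $n\geq r$, so its supremum over $n\in\N$ equals the supremum over $1\leq n\leq r$; and the value at $r=0$ is $0$. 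Hence $\sup_{r\in\N_0,\,n\in\N}=\sup_{r,n\in\N,\,r\geq n}$, which gives (i) and (iii).

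There is no genuine obstacle here: the statement really is an immediate consequence of the two theorems. The only step that deserves a line of justification is the reindexing of the supremum in (i) and (iii), i.e.\ the observation that the tail $m>r$ of the Ces\`aro means is identically zero, so that the supremum is unchanged upon restricting to $r\geq n$.
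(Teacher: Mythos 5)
Your proposal is correct and is exactly the argument the paper intends: the paper gives no separate proof, stating the corollary follows immediately from Theorems~\ref{theo: mean ergodicity for the Montel case} and~\ref{theo: mean ergodicity of F for the Montel case}, and your specialization to $A_\infty(\alpha)$ and $A_0(\alpha)$ (with the Montel property noted at the start of the section) is that same route. Your extra remark that the convention $a_{s,k}=0$ for $s<0$ kills the terms with $m>r$, so that the supremum over $r\in\N_0,\,n\in\N$ coincides with the supremum over $r\geq n$ in (i) and (iii), is the only nontrivial bookkeeping step and you handle it correctly.
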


Now we study ergodicity and related properties for some classical operators acting on the Fr\'echet spaces of holomorphic functions $H(\C)$ and $H(\D)$, where $\D$ denotes the open unit disk in the complex plane. Via the mapping $\Phi$ which assigns to a holomorphic function $f$ the sequence of its Taylor coefficients in the origin, $\Phi(f)=(\frac{f^{(n)}(0)}{n!})_{n\in\N_0}$, these spaces are topologically isomorphic to the power series space $\Lambda_\infty((n+1))_{n\in\N_0}$, respectively $\Lambda_0((n+1)_{n\in\N_0})$. For $G\in\{\C,\D\}$ let $\Delta_0$ be the continuous linear operator on $H(G)$ which  maps $f$ to the holomorphic function $\Delta_0(f)(z)= (f(z)-f(0))/z$, $z\in G\backslash\{0\}$, $\Lambda_0(f)(0)=f'(0)$. Additionally, we denote by $V$ the Volterra operator on $H(G)$ which takes $f\in H(G)$ to the holomorphic function $z\mapsto \int_0^z f(\zeta)\,d\zeta$, $z\in G$. As usual, $\frac{d}{dz}$ denotes the differentiation operator on $H(G)$ which maps $f$ to its derivative $f'$. Ergodicity and related properties of these classical operators have been studied on Banach spaces of entire functions, see e.g.\ \cite{Be14}, \cite{BeBoFe13}. On $H(\C)$ and $H(\D)$, via the topological isomorphism $\Phi$, they are conjugate to suitable weighted shift operators. 

In addition to these operators on spaces on holomorphic functions, we apply our results to the annihilation operator and creation operator on the space of rapidly decreasing smooth functions which we denote as usual by $\mathscr{S}(\R)$, i.e.\ on the space of complex valued smooth functions $f$ on $\R$ which have the property that $\sup_{x\in\R}(1+|x|)^k |f^{(j)}(x)|<\infty$ for all $k,j\in\N_0$. We equip $\mathscr{S}(\R)$ with its standard topology which makes it a (nuclear) Fr\'echet space. The \emph{creation operator} $A_+:\mathscr{S}(\R):\to\mathscr{S}(\R)$ and the \emph{annihilation operator} $A_-:\mathscr{S}(\R):\to\mathscr{S}(\R)$ are defined by
$$ A_+(f)=\frac{1}{\sqrt{2}}\left(-f'+xf\right), \quad A_-(f)=\frac{1}{\sqrt{2}}\left(f'+xf\right),$$
respectively, where we denote the multipication operator with the identity on $\mathscr{S}(\R)$ simply by $xf$. As is well known, via Hermite expansion in $L_2(\R)$, $\mathcal{S}(\R)$ is topologically isomorphic to the power series space $s=\Lambda_\infty\left((\ln(n+1))_{n\in\N_0}\right)$ and via this topological isomorphism, the annihilation and creation operators are conjugate to suitable weighted shift operators on $s$.

\begin{theo}\label{theo: examples}
	With the notation from above, the following hold.
	\begin{enumerate}
		\item[(i)] For $G\in\{\C,\D\}$ the operator $\Delta_0:H(G)\to H(G)$ is power bounded and (uniformly) mean ergodic.
		\item[(ii)] For $G\in\{\C,\D\}$ the Volterra operator $V:H(G)\to H(G)$ is power bounded and (uniformly) mean ergodic.
		\item[(iii)] For $G\in\{\C,\D\}$ the differentiation operator $\frac{d}{dz}:H(G)\to H(G)$ is topologizable but not (uniformly) mean ergodic.
		\item[(iv)] The annihilation operator $A_+:\mathscr{S}(\R)\to\mathscr{S}(\R)$ and the creation operator $A_-:\mathscr{S}(\R)\to\mathscr{S}(\R)$ are both topologizable but neither of them is power bounded nor (uniformly) mean ergodic.
	\end{enumerate}
\end{theo}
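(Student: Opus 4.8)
The plan is to transport each of the four operators to a weighted (backward or forward) shift on a power series space via the standard coordinate isomorphism, and then to read off topologizability, power boundedness, and (uniform) mean ergodicity from Propositions~\ref{prop: topologizable on power series spaces} and~\ref{prop: power bounded on power series spaces} and Corollary~\ref{cor: mean ergodicity on power series spaces}, using that all three properties are conjugacy invariants. Whenever power boundedness holds, uniform mean ergodicity is automatic because power series spaces are Montel (see the remarks after Theorem~\ref{theo:ergodicity and transposed}); whenever it fails, the failure of mean ergodicity will be exhibited by the divergence of the relevant supremum in Corollary~\ref{cor: mean ergodicity on power series spaces}, equivalently by producing $x$ with $\tfrac1n T^n x\not\to 0$.

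\textbf{Step 1: the conjugacies.} Via $\Phi f=(f^{(n)}(0)/n!)_{n\in\N_0}$, which identifies $H(\C)$ with $\Lambda_\infty((n+1)_{n})$ and $H(\D)$ with $\Lambda_0((n+1)_{n})$, a direct look at the action on Taylor coefficients shows that $\Delta_0$ is conjugate to the unweighted backward shift $B$, the Volterra operator $V$ to $F_w$ with $w_n=1/n$, and $\tfrac{d}{dz}$ to $B_w$ with $w_n=n+1$. For $\mathscr{S}(\R)$, the Hermite expansion gives the isomorphism onto $s=\Lambda_\infty((\ln(n+1))_{n})$, and since $A_+h_n=\sqrt{n+1}\,h_{n+1}$ and $A_-h_n=\sqrt{n}\,h_{n-1}$, the operator $A_+$ is conjugate to $F_w$ with $w_n=\sqrt n$ and $A_-$ to $B_w$ with $w_n=\sqrt{n+1}$. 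In every case the weights grow at most polynomially while $\alpha_n$ is $n+1$ or $\ln(n+1)$, so $\limsup_n\ln|w_n|/\alpha_n<\infty$ and Corollary~\ref{cor: continuity on power series spaces} gives continuity of the shift on the corresponding power series space; moreover $\sup_n(\alpha_{n+1}-\alpha_n)<\infty$ throughout, which is what lets one use the ``additional hypothesis'' half of Proposition~\ref{prop: topologizable on power series spaces}(iii) where needed.

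\textbf{Step 2: parts (i)--(iii).} For $\Delta_0$ the products $\prod_{j=0}^{m-1}w_{n+j}$ are constantly $1$; on $H(\C)$ condition~\eqref{eq: power boundedness of backward on infinite power series space} holds trivially, and on $H(\D)$ one tests the corresponding condition of Proposition~\ref{prop: power bounded on power series spaces}(iii). For $V$, $\prod_{j=1}^{m}w_{n+j}=n!/(n+m)!$ has non-positive logarithm bounded above by $-m\ln(n+1)$, which is exactly what~\eqref{eq: power boundedness of forward on infinite power series space} and~\eqref{eq: power boundedness of forward on finite power series space} ask for, so $V$ is power bounded on $H(\C)$ and $H(\D)$; Montellity then upgrades power boundedness to uniform mean ergodicity, giving (i) and (ii). For $\tfrac{d}{dz}$, $\prod_{j=0}^{m-1}w_{n+j}=(n+m)!/n!$ has logarithm $O(m\ln(n+m))=o(\alpha_{n+m})$ as $n\to\infty$ for each fixed $m$, hence~\eqref{eq: topologizability of backward on infinite power series space} and, using bounded gaps $\alpha_{n+1}-\alpha_n$, \eqref{eq: topologizability of backward on finite power series space} hold and $\tfrac{d}{dz}$ is topologizable on $H(\C)$ and $H(\D)$ by Proposition~\ref{prop: topologizable on power series spaces}. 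That it is not mean ergodic is seen by evaluating the relevant supremum in Corollary~\ref{cor: mean ergodicity on power series spaces}(i),(iii) at $r=n=m$: the surviving term is of the form $\tfrac1n\,n!\,e^{O(\alpha_n)}$, which tends to $\infty$ since $n!$ outgrows $n\,e^{O(n)}$. This settles (iii).

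\textbf{Step 3: part (iv) and the main obstacle.} For $A_\pm$ the relevant weight products are square roots of ratios of factorials, so their logarithms are of order $m\ln n$, i.e.\ comparable to $\alpha_n=\ln(n+1)$; feeding this into Proposition~\ref{prop: topologizable on power series spaces}(i),(ii) determines the topologizability part of (iv), while testing the conditions of Proposition~\ref{prop: power bounded on power series spaces} and Corollary~\ref{cor: mean ergodicity on power series spaces}(i),(ii) at extremal indices (such as $n\in\{0,1\}$, or $r=n=m$, where terms like $\sqrt{n!}\big/(n\,e^{l\alpha_n})\to\infty$ occur) rules out power boundedness and mean ergodicity. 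I expect the genuinely delicate steps to be, first, Step~1 --- correctly pinning down the weighted shifts conjugate to $\Delta_0$, $V$, $\tfrac{d}{dz}$ and $A_\pm$, with the exact index shifts and with the Hermite expansion genuinely an isomorphism \emph{onto} $s$ --- and, second, within Steps~2 and~3, the book-keeping of locating, for each failing (lim)sup, the precise family of index pairs along which it diverges. Once the shifts are identified, the remaining work is elementary asymptotics of factorials and logarithms against the sequences $(n+1)_n$ and $(\ln(n+1))_n$.
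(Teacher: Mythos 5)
Your conjugacies in Step 1 and your divergence arguments for $\tfrac{d}{dz}$ in (iii) coincide with the paper's proof, but two steps would not survive being carried out. First, in (ii) the only substantial verification is power boundedness of $V$ on $H(\C)$, i.e.\ condition \eqref{eq: power boundedness of forward on infinite power series space} for \emph{every} $k$, and your justification does not deliver it: the inequality $\ln\left|n!/(n+m)!\right|\leq -m\ln(n+1)$ is true but useless for small $n$, because the numerator of \eqref{eq: power boundedness of forward on infinite power series space} also carries the term $k\alpha_{n+m}=k(n+m+1)$, which grows linearly in $m$; at $n=0$ your bound gives $k(m+1)/\alpha_0\to\infty$. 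One genuinely needs $\sum_{j=1}^m\ln(n+j)\geq\ln(m!)\sim m\ln m-m$ to dominate $km$, and the paper does exactly this with Stirling's formula in a three-case analysis ($n$ bounded and $m\to\infty$; $m$ bounded and $n\to\infty$; $\min(n,m)\to\infty$). Your phrase ``exactly what \eqref{eq: power boundedness of forward on infinite power series space} and \eqref{eq: power boundedness of forward on finite power series space} ask for'' is correct only for the finite-type condition \eqref{eq: power boundedness of forward on finite power series space}, i.e.\ for $H(\D)$. A related caveat concerns (i) on $H(\D)$: if you actually test \eqref{eq: power boundedness of backward on finite power series space} with $w\equiv 1$ and $\alpha_n=n+1$, the quantity is $-\tfrac{n+1}{(k+1)(n+m+1)}$, whose $\limsup$ along $n+m\to\infty$ (take $n=0$, $m\to\infty$) equals $0$, so the required strict inequality fails; note also that $\Vert B^me_m\Vert_k=a_{0,k}$ stays bounded below while $\Vert e_m\Vert_l\to 0$ in $\Lambda_0((n+1)_n)$. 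So this step is not the routine check your sketch (or the paper's one-line citation) suggests and needs separate attention.

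Second, in (iv) your own asymptotics, pushed to the end, give the opposite of what your wording suggests: $\ln\prod_{j=0}^{m-1}\sqrt{n+j+1}=\tfrac{m}{2}\ln n+O(m)$ is not ``comparable to $\alpha_n=\ln(n+1)$'' but is asymptotically $\tfrac m2$ times $\alpha_{n+m}$, so in Proposition \ref{prop: topologizable on power series spaces}\,(i) the inner $\limsup$ equals $m/2$ and the supremum over $m$ is infinite, and likewise for Proposition \ref{prop: topologizable on power series spaces}\,(ii) already at $k=0$. This is precisely what the paper's proof establishes: $A_-$ and $A_+$ are \emph{not} topologizable on $\mathscr{S}(\R)$ (the proof shows non-topologizability, whatever the wording of the statement), and then failure of power boundedness and of mean ergodicity follows a fortiori, since on a barrelled space both properties imply topologizability; no separate testing of Proposition \ref{prop: power bounded on power series spaces} or of Corollary \ref{cor: mean ergodicity on power series spaces} at extremal indices is needed. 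As written, your Step 3 leaves the decisive topologizability verdict open (or, read literally, asserts the wrong one), and that verdict is the crux of part (iv).
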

\begin{proof}
As already mentioned in the introduction to the theorem, the space $H(\C)$ of entire functions is topologically isomorphic to the infinite type power series space $\Lambda_\infty((n+1)_{n\in\N_0})$, i.e.\ $\alpha_n=n+1, n\in\N_0$, while the space $H(\D)$ is topologically isomorphic to the finite type power series space $\Lambda_0((n+1)_{n\in\N_0})$, i.e.\ $\alpha_n=n+1, n\in\N_0$. In both cases, the topological isomorphism is given by the mapping which takes a holomorphic function $f$ to the sequence of its Taylor coefficients in the origin. Via this mapping the continuous linear operator $\Delta_0$ is conjugate to the (unweighted) backward shift $B$ on $\Lambda_\infty((n+1)_{n\in\N_0})$, respectively on $\Lambda_0((n+1)_{n\in\N_0})$. Therefore, it follows immediately from Proposition \ref{prop: power bounded on power series spaces} (i), respectively (iii), that $\Delta_0$ is power bounded and therefore uniformly mean ergodic on $H(\C)$ and $H(\D)$, respectively, which proves (i).

(ii) Similarly to (i), the operator $V:H(\C)\to H(\C)$ is conjugate to the weighted forward shift $F_w$ with weight sequence $w=\left(\frac{1}{\max\{1,n\}}\right)_{n\in\N_0}$ on $\Lambda_\infty((n+1)_{n\in\N_0})$ and $V:H(\D)\to H(\D)$ is conjugate to the weighted forward shift $F_w$ with weight sequence $w=\left(\frac{1}{\max\{1,n\}}\right)_{n\in\N_0}$ on $\Lambda_0((n+1)_{n\in\N_0})$. Because
$$\limsup_{n+m\rightarrow\infty}\frac{\ln\left|n!/(n+m)!\right|}{n+m+1}\leq 0,$$
by Proposition \ref{prop: power bounded on power series spaces} (iv), $V$ is power bounded, thus, in particular, uniformly mean ergodic on $H(\D)$.

We shall prove that $V$ is power bounded on $H(\C)$ by applying Proposition \ref{prop: power bounded on power series spaces} (ii). Thus, let $k\in\N_0$ be arbitrary. We have to show that 
\begin{equation}\label{ex: operators on entire functions power bounded}
\frac{\ln\left|n!/(n+m)!\right|+k(n+m+1)}{n+1}=\frac{k(n+m+1)-\sum_{j=1}^m \ln(n+j)}{n+1}.
\end{equation}
has an upper bound independent of $n\in \N_0$ and $m\in \N$. In order to do so, we distinguish three cases depending on the choices of $n$ and $m$.

Case 1: Fix an arbitrary $N>0$. Consider every $n<N$ and let $m\to\infty$. Then using Stirling's formula we obtain
\begin{align*}
\lim_{m\to \infty} &\frac{k(n+m+1)-\sum_{j=1}^m \ln(n+j)}{n+1}\leq \lim_{m\to \infty}k+\frac{km}{n+1} - \frac{\sum_{j=1}^m \ln(j)}{n+1}= \lim_{m\to \infty}k+\frac{km}{n+1} - \frac{\ln(m!)}{n+1}\\
=& \lim_{m\to \infty} k+\frac{km}{n+1} - \frac{(m+\tfrac{1}{2})\ln(m)-m}{n+1}=\lim_{m\to \infty} k+\frac{(k-\ln(m)-1)m}{n+1} - \frac{\ln(m)}{2(n+1)}\leq k<\infty,
\end{align*}
for every $n<N$.

Case 2: Fix an arbitrary $M>0$. Consider every $m<M$ and let $n\to\infty$. Then we have
\begin{align*}
\lim_{n\to \infty} &\frac{k(n+m+1)-\sum_{j=1}^m \ln(n+j)}{n+1}\leq \lim_{n\to \infty} \frac{k(n+m+1)}{n}-\frac{m\ln(n)}{n}\\
\leq& \lim_{n\to \infty} k + \frac{(k-\ln(n))m}{n}\leq k<\infty.
\end{align*}

Case 3: We choose both $n$ and $m$ tending to $\infty$. To that aim we introduce the following notation. Given a matrix $\left(A_{n,m}\right)_{n\in\N_0,m\in\N}\in\R^{\N_0\times\N}$ we define the superior limit along $\min(n,m)$ as
\[ \limsup_{\min(n,m)\to \infty} A_{n,m}:= \inf_{k\in\N}\left\{ \sup_{\min(n,m)\geq k}\left\{ A_{n,m} \right\}\right\}. \]
Now we compute this superior limit for \eqref{ex: operators on entire functions power bounded} using Stirling's formula:
\begin{align*}
\limsup_{\min(n,m)\to \infty} & \frac{\ln\left|n!/(n+m)!\right|+k(n+m+1)}{n+1}\\ =  &\limsup_{\min(n,m)\to \infty} k+\frac{km}{n+1} + \frac{(n+\tfrac{1}{2})\ln(n)-n}{n+1}-\frac{(n+m+\tfrac{1}{2})\ln(n+m)-n-m}{n+1}\\ \leq 
&\limsup_{\min(n,m)\to \infty} k+\frac{(k-\ln(n+m)-1)m}{n+1}+ \frac{(n+\tfrac{1}{2})(\ln(n)-\ln(n+m))}{n+1}\leq k <\infty.
\end{align*}
Combining the above three cases we conclude that
$$\forall\,k\in\N_0:\,\sup_{n\in\N_0,m\in\N}\frac{\ln\left|n!/(n+m)!\right|+k(n+m+1)}{n+1}<\infty.$$ 
Thus, by Proposition \ref{prop: power bounded on power series spaces} (ii), $V$ is power bounded and therefore also uniformly mean ergodic on $H(\C)$ which proves statement (ii).

(iii) The operator $\frac{d}{dz}:H(\C)\to H(\C)$ is conjugate to the weighted backward shift $B_w$ with weight sequence $w=(n+1)_{n\in\N_0}$ on $\Lambda_\infty((n+1)_{n\in\N_0})$. While it is well known that $\frac{d}{dz}$ is topologically transitive and thus, it cannot be power bounded on $H(\C)$, for arbitrary $k,l\in\N_0$ it follows from
\begin{align*}
\sup_{r,n\in\N, r\geq n}\frac{1}{n} &\sum_{m=1}^n\frac{r!}{(r-m)!}e^{k(r-m)-lr}\geq\sup_{r,n\in\N,r\geq n}\frac{e^{r(k-l)}}{n}\sum_{m=1}^n(m-1)!e^{-km}\\
\geq&\sup_{r\in\N}\frac{e^{r(k-l)}}{r}(r-1)!e^{-kr}=\sup_{r\in\N}\frac{e^{-lr}r!}{r^2}
\geq\sup_{r\in\N}\frac{e^{-lr}\sqrt{2\pi r}e^{r\ln(r)-r}}{r^2}\\
=&\sqrt{2\pi}\sup_{r\in\N}\frac{e^{r(\ln(r)-(l+1))}}{r^{3/2}}=\infty
\end{align*}
together with Corollary \ref{cor: mean ergodicity on power series spaces} (i) that $\frac{d}{dz}$ is not mean ergodic on $H(\C)$. On the other hand, with Stirling's formula,
\begin{eqnarray*}
	&&\sup_{m\in\N}\limsup_{n\rightarrow\infty}\frac{\ln\left|\frac{(m+n+1)!}{n!}\right|}{n+m+1}\\
	&=&\sup_{m\in\N}\limsup_{n\rightarrow\infty}\frac{(m+n+\frac{3}{2})\ln(m+n+1)-(m+n+1)-(n+\frac{1}{2})\ln(n)+n}{n}\\
	&=&\sup_{m\in\N}\limsup_{n\rightarrow\infty}\frac{(m+1)\ln(m+n+1)+(n+\frac{1}{2})\ln\left(\frac{m+n+1}{n}\right)-(m+1)}{n}=0.
\end{eqnarray*}
Thus, by Proposition \ref{prop: topologizable on power series spaces} (i), $\frac{d}{dz}$ is topologizable on $H(\C)$. Of course, the latter can also be derived directly with the aid of the Cauchy formulas for derivatives.

Likewise, the differentiation operator $\frac{d}{dz}:H(\D)\to H(\D)$ is conjugate to the weighted backward shift $B_w$ with weight sequence $w=(n+1)_{n\in\N_0}$ on $\Lambda_0((n+1)_{n\in\N_0})$. Because for arbitrary $k,l\in\N_0$ it holds
\begin{align*}
\sup_{r,n\in\N, r\geq n}&\frac{1}{n}\sum_{m=1}^n\frac{r!}{(r-m)!}\exp\left(\frac{r}{l+1}-\frac{r-m}{k+1}\right)\geq\sup_{r,n\in\N, r\geq n}\frac{\exp\left(r\left(\frac{1}{l+1}-\frac{1}{k+1}\right)\right)}{n}\sum_{m=1}^n (m-1)!\\
\geq&\sup_{r\in\N}\frac{\exp\left(r\left(\frac{1}{l+1}-\frac{1}{k+1}\right)\right)r!}{r^2}
\geq\sqrt{2\pi}\sup_{r\in\N}\frac{\exp\left(r\left(\frac{1}{l+1}-\frac{1}{k+1}+\ln(r)-1\right)\right)}{r^{3/2}}=\infty,
\end{align*}
by Corollary \ref{cor: mean ergodicity on power series spaces} (iii) it follows that $\frac{d}{dz}$ is not mean ergodic on $H(\D)$. However, due to
\begin{eqnarray*}
	\sup_{m\in\N}\limsup_{n\rightarrow\infty}\frac{\ln\left|\frac{(n+m)!}{n!}\right|}{n+m}&=&\sup_{m\in\N}\limsup_{n\rightarrow\infty}\frac{(n+m+\frac{1}{2})\ln(n+m)-(n+m)-(n+\frac{1}{2})\ln(n)+n}{n+m}\\
	&=&\sup_{m\in\N}\limsup_{n\rightarrow\infty}\frac{m\ln(n+m)-m+(n+\frac{1}{2})\ln(\frac{n+m}{n})}{n+m}=0,
\end{eqnarray*}
it follows from Proposition \ref{prop: topologizable on power series spaces} (iii) that $\frac{d}{dz}$ is topologizable on $H(\D)$. Alternatively, as mentioned in before, this can be seen directly from Cauchy's formulas for derivatives.  

(iv) The mapping 
$$H:\mathscr{S}(\R)\rightarrow s=\Lambda_\infty\left((\ln(n+1))_{n\in\N_0}\right),\quad f\mapsto \left(\langle f, H_n\rangle\right)_{n\in\N_0}$$
is a topological isomorphism, where $H_n, n\in\N_0,$ denotes the $n$-th Hermite function and $\langle f, H_n\rangle$ the $L_2(\R)$-scalar product of $f$ and $H_n$, see \cite[Example 29.5.2]{MeVo1997}. Via this topological isomorphism the creation operator $A_-:\mathscr{S}(\R)\to\mathscr{S}(\R)$ is conjugate to the weighted backward shift $B_{(\sqrt{n+1})_{n\in\N_0}} :s\to s$ while the annihilation operator $A_+:\mathscr{S}(\R)\to\mathscr{S}(\R)$ is conjugate to the weighted forward shift $F_{(\sqrt{n})_{n\in\N_0}}:s\to s$.

Employing again Stirling's formula as well as $\lim\limits_{n\rightarrow\infty}(n+1/2)\ln(1+m/n)=m$ we see that
\begin{eqnarray*}
	\sup_{m\in\N_0}\limsup_{n\rightarrow\infty}\frac{\ln\sqrt{\frac{(n+m)!}{n!}}}{\ln(n+m+1)}&=&\frac{1}{2}\sup_{m\in\N_0}\limsup_{n\rightarrow\infty}\frac{\left(n+m+\frac{1}{2}\right)\ln(n+m)-m-\left(n+\frac{1}{2}\right)\ln(n)}{\ln(n+m+1)}\\&=&\frac{1}{2}\sup_{m\in\N_0}\limsup_{n\rightarrow\infty}\frac{\left(n+\frac{1}{2}\right)\ln\left(1+\frac{m}{n}\right)-m+m\ln(n+m)}{\ln(n+m+1)}\\
	&=&\frac{1}{2}\sup_{m\in\N}m=\infty.
\end{eqnarray*}
Hence, from Proposition \ref{prop: topologizable on power series spaces} (i) we derive that $A_-$ is not topologizable, a fortiori neither power bounded nor (uniformly) mean ergodic, on $\mathscr{S}(\R)$.

Finally, similar as above,
$$\sup_{m\in\N_0}\limsup_{n\rightarrow\infty}\frac{\ln\sqrt{\frac{(n+m)!}{n!}}}{\ln(n+1)}=\frac{1}{2}\sup_{m\in\N_0}\limsup_{n\rightarrow\infty}\frac{\left(n+\frac{1}{2}\right)\ln\left(1+\frac{m}{n}\right)-m+m\ln(n+m)}{\ln(n+1)}=\infty.$$	
Therefore, applying Proposition \ref{prop: topologizable on power series spaces} (ii) to $k=0$, we see that $A_+$ is not topologizable, hence neither power bounded nor mean ergodic on $\mathscr{S}(\R)$. 
\end{proof}

\textbf{Acknowledgements.} We are very grateful to José Bonet for valuable comments and suggestions about this work. The first author was partially supported by the project PID2020-119457GB-100 funded by MCIN/AEI/10.13039/501100011033 and by “ERDF A way of making Europe”, and the second author was partially supported by the project GV AICO/2021/170.

\bibliographystyle{abbrv}
\bibliography{bib_KS}

\end{document}